\providecommand{\U}[1]{\protect\rule{.1in}{.1in}}
\newtheorem{theorem}{Theorem}[section]
\newtheorem{corollary}[theorem]{Corollary}
\newtheorem{definition}[theorem]{Definition}
\newtheorem{assumption}[theorem]{Assumption}
\newtheorem{example}[theorem]{Example}
\newtheorem{lemma}[theorem]{Lemma}
\newtheorem{proposition}[theorem]{Proposition}
\newtheorem{remark}[theorem]{Remark}
\newenvironment{proof}[1][Proof]{\noindent \textbf{#1.} }{\  \rule{0.5em}{0.5em}}
\numberwithin{equation}{section}
\begin{document}

\title{Solvability of finite state forward-backward stochastic difference equations}
\author{Shaolin Ji\thanks{Zhongtai Securities Institute for Financial Studies,
Shandong University, Jinan, Shandong 250100, PR China. jsl@sdu.edu.cn.
Research supported by NSF (No. 11571203).}
\and Haodong Liu\thanks{Zhongtai Securities Institute for Financial Studies,
Shandong University, Jinan, Shandong 250100, PR China. (Corresponding
author).}}
\maketitle

\textbf{Abstract}: In this paper, we consider the solvability problems for the
fully coupled forward-backward stochastic difference equations (FBS$\Delta$Es)
on spaces related to discrete time, finite state processes. On one hand, we
provide the necessary and sufficient condition for the solvability of the
linear FBS$\Delta$Es. On the other hand, under the assumption that the
coefficients satisfy the monotone condition, we investigate the existence and
uniqueness theorems for the general nonlinear FBS$\Delta$Es.

{\textbf{Keywords}: forward-backward stochastic difference equations;
martingale representation theorem; }continuation method

\addcontentsline{toc}{section}{\hspace*{1.8em}Abstract}

\section{Introduction}

It is well-known that forward-backward stochastic differential equations
(FBSDEs) are widely studied by many researchers and there are fruitful results
in both theory and application (see \cite{hp95} and
\cite{mpy94,mwzz15,my93,pt99}).

In this paper, we study forward-backward stochastic difference equations
(FBS$\Delta$Es) on spaces related to discrete time, finite state processes. As
the discrete time counterpart of FBSDEs, the FBS$\Delta$Es also have wide
applications in many areas, such as discrete time stochastic optimal control
theory, stochastic difference games, mathematical finance, etc. For instance,
the Hamiltonian systems of discrete time stochastic optimal control problems
are FBS$\Delta$Es (see \cite{lz15}, \cite{zlxf15}); FBS$\Delta$Es can also be
regarded as the state equations for a discrete time recursive utility
optimization problem since some discrete time nonlinear expectations are
defined by backward stochastic difference equations (BS$\Delta$Es). As far as
we know, there are few works dealing with the solvability of FBS$\Delta$Es
because the forward variables and backward variables are coupled in these equations.

The starting point of exploring the solvability of FBS$\Delta$Es is to
formulate FBS$\Delta$Es. It is worth pointing out that even though the
BS$\Delta$E is the discrete time counterpart of the backward stochastic
differential equation (BSDE), the formulations are quite different. Based on
the driving process, there are mainly two formulations of BS$\Delta$Es (see,
e.g. \cite{bcc14}, \cite{ce10+}, \cite{ce11}). One is driving by a finite
state process taking values from the basis vectors (as in \cite{ce10+}) and
the other is driving by a martingale with independent increments (as in
\cite{bcc14}). For the first formulation, the researchers in \cite{ce10+}
obtained the discrete time version of martingale representation theorem and
establish the solvability result of BS$\Delta$E with the uniqueness of $Z$
under a new kind of equivalence relation.\ Further works about the
applications of this formulation can be seen in \cite{em11}, \cite{m10},
\cite{ly14}. In this paper, we take this formulation to construct our
FBS$\Delta$Es. Note that taking the second formulation, one kind of linear
FBS$\Delta$Es was studied in \cite{xzx17}, \cite{xzx18}.

Before investigating the solvability of FBS$\Delta$Es, as preliminaries, we
prove some results about the BS$\Delta$Es. In more details, we formulate one
kind of BS$\Delta$Es in our context, obtain an explicit representation of the
solution $(Z_{t})$ and prove the existence and uniqueness of solutions to our
formulated BS$\Delta$E. Note that the driving process $M$ usually doesn't have
independent increments which leads to the estimation of the solution $Z_{t}$
can not be derived directly from the value of $\mathbb{E}\left\vert
Z_{t}M_{t+1}\right\vert ^{2}$. But in our context, we overcome this problem
and show that the norm of $Z_{t}$ is dominated by $L\mathbb{E}\left\vert
Z_{t}M_{t+1}\right\vert ^{2}$ where the constant $L$ depends on the
probability $P$.

Then we study the solvability problem for the linear FBS$\Delta$Es. As the
special case of the general nonlinear FBS$\Delta$Es, the simple and nice
structure of the linear FBS$\Delta$Es leads to a necessary and sufficient
condition for the existence and uniqueness of the solutions. On the other
hand, the obtained results for the linear FBS$\Delta$Es will be applied to
prove the solvability results for the nonlinear case. For the linear case,
FBS$\Delta$Es can be transformed into $N$-dimensional linear algebraic
equations of $X_{t+1}$. Based on this, we prove the equivalence between the
solvability of linear FBS$\Delta$E and the solvability of linear algebraic
equations which leads to a necessary and sufficient condition for the
existence and uniqueness of solutions. By solving the linear algebraic
equations, we can decouple the forward and backward variables and obtain the
explicit expression of $Y_{t}$ with respect to $X_{t}$. Thus, the linear
FBS$\Delta$E can be solved recursively in an explicit form.

Finally, the solvability problem for the nonlinear FBS$\Delta$Es is studied.
We apply the continuation method developed in \cite{hp95} to our discrete time
framework. Under the assumption that the coefficients satisfy the monotone
condition, we obtain the existence and uniqueness theorem for the general
nonlinear FBS$\Delta$Es. There are mainly two techniques which are different
from the continuous time case when we apply the continuation method in our
discrete time context. Firstly, It\^{o} formula, which is the basic technique
in the continuous time stochastic calculus, doesn't work in our discrete time
case. So we must choose a suitable representation of the product rule
$\Delta\left\langle X_{t},Y_{t}\right\rangle =\left\langle \Delta X_{t}%
,Y_{t}\right\rangle +\left\langle X_{t},\Delta Y_{t}\right\rangle
+\left\langle \Delta X_{t},\Delta Y_{t}\right\rangle $. Here we propose the
following form of the product rule%
\[
\Delta\left\langle X_{t},Y_{t}\right\rangle =\left\langle X_{t+1},\Delta
Y_{t}\right\rangle +\left\langle \Delta X_{t},Y_{t}\right\rangle
\]
in order to obtain the desired result. Secondly, since the driving process $M$
doesn't have independent increments, we suggest a new monotone condition which
contains the $\mathbb{E}\left[  M_{t+1}M_{t+1}^{\ast}|\mathcal{F}_{t}\right]
$ term.

The remainder of this paper is organized as follows. In Section 2 we present
preliminary results of the BS$\Delta$Es. We then obtain the solvability
results for linear FBS$\Delta$E (\ref{fbsde_linear_1d}) in Section 3. The
solvability results for nonlinear FBS$\Delta$E
(\ref{nonlinear_fbsde_c_method_m0}) are given in Section 4.

\section{Preliminaries}

Following \cite{ce10+}, we consider an underlying discrete time, finite state
process $W$ which takes values in the standard basis vectors of $\mathbb{R}%
^{N}$, where $N$ is the number of states of the process $W$. In more detail,
for each $t\in\left\{  0,1,...,T\right\}  $, $W_{t}\in\left\{  e_{1}%
,e_{2},...e_{N}\right\}  $, where $T>0$ is a finite deterministic terminal
time, $e_{i}=\left(  0,0,...,0,1,0,...,0\right)  ^{\ast}\in\mathbb{R}^{N}$,
and $\left[  \cdot\right]  ^{\ast}$ denotes vector transposition. Consider a
filtered probability space $\left(  \Omega,\mathcal{F},\left\{  \mathcal{F}%
_{t}\right\}  _{0\leq t\leq T},P\right)  $, where $\mathcal{F}_{t}$ is the
completion of the $\sigma$-algebra generated by the process $W$ up to time $t$
and $\mathcal{F}=\mathcal{F}_{T}$.

For simplicity, we suppose the process $W$ satisfies the following assumption.

\begin{assumption}
\label{general_assumption}For any $t\in\left\{  0,1,2,...,T-1\right\}  $ and
$\omega\in\Omega$, $\mathbb{E}\left[  W_{t+1}|\mathcal{F}_{t}\right]  \left(
\omega\right)  >0.$
\end{assumption}

Note that in this paper, an inequality on a vector quantity is to hold
componentwise. Under the above assumption, the conception "$P-$almost surely"
in the following statements can be changed to "for every $\omega$". In fact,
this assumption is not necessary and is just for the sake of simple expression.

Denote by $L\left(  \mathcal{F}_{t};\mathbb{R}^{K\times N}\right)  $ the set
of all $\mathcal{F}_{t}-$adapted random variable $X_{t}$ taking values in
$\mathbb{R}^{K\times N}$ and by $\mathcal{M}\left(  0,t;\mathbb{R}^{K\times
N}\right)  $ the set of all $\left\{  \mathcal{F}_{s}\right\}  _{0\leq s\leq
t}$ adapted process $X$ taking values in $\mathbb{R}^{K\times N}$ with the
norm $\left\Vert X\right\Vert =\left(  \mathbb{E}\left[  \sum_{s=0}%
^{t}\left\vert X_{s}\right\vert ^{2}\right]  \right)  ^{\frac{1}{2}}$. Let
$(P_{t}^{1},...,P_{t}^{N})^{\ast}:=\mathbb{E}\left[  W_{t+1}|\mathcal{F}%
_{t}\right]  $.

Define%
\[
M_{t}=W_{t}-\mathbb{E}\left[  W_{t}|\mathcal{F}_{t-1}\right]  ,t=1,...,T.
\]

Then $M$ is a martingale difference process taking values in $\mathbb{R}^{N}$.
The following definition is given in \cite{ce10+}.

\begin{definition}
For two $\mathcal{F}_{t}$-measurable random variables $Z_{t}$ and
$\widetilde{Z}_{t}$, we define $Z_{t}\thicksim_{M_{t+1}}\widetilde{Z}_{t}$, if
$Z_{t}M_{t+1}=\widetilde{Z}_{t}M_{t+1};$

For two adapted processes $Z$ and $\widetilde{Z}$, we define $Z\thicksim
_{M}\widetilde{Z}$, if $Z_{t}M_{t+1}=\widetilde{Z}_{t}M_{t+1}$ for any
$t\in\left\{  0,1,2,...,T-1\right\}  .$
\end{definition}

Denote by $\mathbf{1}_{N}\mathbf{=}\left(  1,1,...,1\right)  ^{\ast}$ the
$N$-dimensional vector where every element is equal to $1$, by $\mathbf{1}%
_{N\times N}$ the $N\times N$-matrix where every element is equal to $1$, by
$I_{N\times N}$ the $N\times N$-identity matrix and by $\widetilde{I}$ the
$N\times\left(  N-1\right)  $-matrix $%
\begin{pmatrix}
I_{(N-1)\times\left(  N-1\right)  } & -\mathbf{1}_{N-1}%
\end{pmatrix}
^{\ast}$.

In the following lemma, we give three equivalent statements which are
different from the ones in \cite{ce10+}.

\begin{lemma}
\label{m equality}For $Z_{t}$ and $\widetilde{Z}_{t}\in L\left(
\mathcal{F}_{t};\mathbb{R}^{K\times N}\right)  $, the following statements are equivalent:

(i) $Z_{t}\thicksim_{M_{t+1}}\widetilde{Z}_{t};$

(ii) There exists $C_{t}\in L\left(  \mathcal{F}_{t};\mathbb{R}^{K}\right)  $
such that $Z_{t}=\widetilde{Z}_{t}+C_{t}\mathbf{1}_{N}^{\ast}\mathbf{;}$

(iii) $Z_{t}\widetilde{I}=\widetilde{Z}_{t}\widetilde{I}.$
\end{lemma}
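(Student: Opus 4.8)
The plan is to prove the three implications in the cycle (ii) $\Rightarrow$ (i) $\Rightarrow$ (iii) $\Rightarrow$ (ii), using the single structural fact that $M_{t+1} = W_{t+1} - \mathbb{E}[W_{t+1}\mid\mathcal{F}_t]$ takes values in the affine hyperplane $\{v\in\mathbb{R}^N : \mathbf{1}_N^\ast v = 0\}$, since $W_{t+1}$ is always a basis vector and its conditional expectation $(P_t^1,\dots,P_t^N)^\ast$ has entries summing to $1$. Equivalently, $\mathbf{1}_N^\ast M_{t+1} = 0$ pointwise, and the columns of $\widetilde{I}$ span exactly the orthogonal complement of $\mathbf{1}_N$ in $\mathbb{R}^N$.

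For (ii) $\Rightarrow$ (i): if $Z_t = \widetilde{Z}_t + C_t\mathbf{1}_N^\ast$, then $Z_t M_{t+1} = \widetilde{Z}_t M_{t+1} + C_t(\mathbf{1}_N^\ast M_{t+1}) = \widetilde{Z}_t M_{t+1}$ since $\mathbf{1}_N^\ast M_{t+1} = 0$; this is immediate. For (i) $\Rightarrow$ (iii): I would show that each column of $\widetilde{I}$ lies in the (random, $\mathcal{F}_t$-measurable) linear span of the realizations of $M_{t+1}$. Concretely, under Assumption \ref{general_assumption} every $P_t^i > 0$, so all $N$ basis vectors $e_i$ occur with positive conditional probability, hence $M_{t+1}$ takes the $N$ values $e_i - (P_t^1,\dots,P_t^N)^\ast$. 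The differences of these, $e_i - e_N$ for $i = 1,\dots,N-1$, are exactly the columns of $\widetilde{I}$, and each such difference is a difference of two realizations of $M_{t+1}$; since $Z_t M_{t+1} = \widetilde{Z}_t M_{t+1}$ on each atom, subtracting gives $Z_t(e_i - e_N) = \widetilde{Z}_t(e_i - e_N)$, i.e. $Z_t\widetilde{I} = \widetilde{Z}_t\widetilde{I}$. Here one must be slightly careful that the relation $Z_t M_{t+1} = \widetilde{Z}_t M_{t+1}$ is an identity of random variables, so it holds $\omega$ by $\omega$ (by Assumption \ref{general_assumption}, "for every $\omega$"), and on the set where $W_{t+1} = e_i$ it reads $Z_t(e_i - P_t) = \widetilde{Z}_t(e_i - P_t)$ with $P_t := (P_t^1,\dots,P_t^N)^\ast$; since the sets $\{W_{t+1} = e_i\}$ for each $i$ all have positive conditional probability on the relevant atom of $\mathcal{F}_t$, we may evaluate for each $i$ on the same $\mathcal{F}_t$-atom and subtract.

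For (iii) $\Rightarrow$ (ii): set $D_t := Z_t - \widetilde{Z}_t \in L(\mathcal{F}_t;\mathbb{R}^{K\times N})$; the hypothesis says $D_t\widetilde{I} = 0$, i.e. $D_t(e_i - e_N) = 0$ for $i = 1,\dots,N-1$, so all columns of $D_t$ are equal, say each equal to $C_t \in L(\mathcal{F}_t;\mathbb{R}^K)$, which is exactly $D_t = C_t\mathbf{1}_N^\ast$, giving (ii). The main obstacle — really the only subtlety — is the argument in (i) $\Rightarrow$ (iii): one needs Assumption \ref{general_assumption} to guarantee that all $N$ realizations of $W_{t+1}$ are attainable so that the vectors $e_i - e_N$ genuinely arise as differences of realized values of $M_{t+1}$ on a common $\mathcal{F}_t$-atom; without the positivity of every $P_t^i$ the span of the realizations could be a proper subspace of $\mathbf{1}_N^\perp$ and the equivalence would fail. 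Everything else is elementary linear algebra over the columns of the matrices involved.
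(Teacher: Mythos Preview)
Your proof is correct and complete. The cycle you run, (ii) $\Rightarrow$ (i) $\Rightarrow$ (iii) $\Rightarrow$ (ii), differs from the paper's order (i) $\Rightarrow$ (ii) $\Rightarrow$ (iii) $\Rightarrow$ (i), and the substantive step is handled differently. The paper establishes (i) $\Rightarrow$ (ii) by multiplying $Z_t M_{t+1} = \widetilde{Z}_t M_{t+1}$ by the indicators $1_{\{W_{t+1}=e_j\}}$, taking $\mathcal{F}_t$-conditional expectations, and then solving the resulting $N\times N$ linear system to conclude that all column differences $Z_t^{ij}-\widetilde{Z}_t^{ij}$ coincide. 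Your (i) $\Rightarrow$ (iii) is the geometric shortcut of the same idea: you observe directly that on each $\mathcal{F}_t$-atom the equality $(Z_t-\widetilde{Z}_t)(e_i-P_t)=0$ holds for every $i$ (this is exactly what the paper's system encodes once one divides by $P_t^j>0$), and then you subtract two such relations to obtain $(Z_t-\widetilde{Z}_t)(e_i-e_N)=0$ without ever writing down the system. Both arguments invoke Assumption \ref{general_assumption} at the same point and for the same reason---to guarantee that every basis vector $e_i$ is actually realized---so the logical content is equivalent; your route simply bypasses the explicit matrix algebra and makes the role of the hyperplane $\mathbf{1}_N^\perp$ transparent.
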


\begin{proof}
Set $Z_{t}=\left(  Z_{t}^{1},Z_{t}^{2},...,Z_{t}^{K}\right)  ^{\ast}$ and
$\widetilde{Z}_{t}=\left(  \widetilde{Z}_{t}^{1},\widetilde{Z}_{t}%
^{2},...,\widetilde{Z}_{t}^{K}\right)  ^{\ast}$ where $Z_{t}^{i}=\left(
Z_{t}^{i1},Z_{t}^{i2},...,Z_{t}^{iN}\right)  $ and\ $\widetilde{Z}_{t}%
^{i}=\left(  \widetilde{Z}_{t}^{i1},\widetilde{Z}_{t}^{i2},...,\widetilde{Z}%
_{t}^{iN}\right)  $ for $i\in\left\{  1,2,...,K\right\}  $.

\textbf{Step 1.} (i) $\Rightarrow$ (ii):

If $Z_{t}\thicksim_{M_{t+1}}\widetilde{Z}_{t}$, then $Z_{t}M_{t+1}%
=\widetilde{Z}_{t}M_{t+1}$. For a given $i$, we have%
\[
\left(  W_{t+1}-\mathbb{E}\left[  W_{t+1}|\mathcal{F}_{t}\right]  \right)
^{\ast}\left(  Z_{t}^{i}-\widetilde{Z}_{t}^{i}\right)  ^{\ast}=0.
\]
Then for every $j\in\left\{  1,2,...,N\right\}  $, by multiplying the above
equation with the indicator function $1_{\left\{  W_{t+1}=e_{j}\right\}  }$
and taking the $\mathcal{F}_{t}$-conditional expectation, we get the following
linear equations for $(Z_{t}^{i1}-\widetilde{Z}_{t}^{i1},Z_{t}^{i2}%
-\widetilde{Z}_{t}^{i2},\cdots,Z_{t}^{iN}-\widetilde{Z}_{t}^{iN})$:%
\[%
\begin{pmatrix}
e_{1}^{\ast}\mathbb{E}\left[  W_{t+1}|\mathcal{F}_{t}\right]  \left(
e_{1}-\mathbb{E}\left[  W_{t+1}|\mathcal{F}_{t}\right]  \right)  ^{\ast}\\
e_{2}^{\ast}\mathbb{E}\left[  W_{t+1}|\mathcal{F}_{t}\right]  \left(
e_{2}-\mathbb{E}\left[  W_{t+1}|\mathcal{F}_{t}\right]  \right)  ^{\ast}\\
\vdots\\
e_{N}^{\ast}\mathbb{E}\left[  W_{t+1}|\mathcal{F}_{t}\right]  \left(
e_{N}-\mathbb{E}\left[  W_{t+1}|\mathcal{F}_{t}\right]  \right)  ^{\ast}%
\end{pmatrix}%
\begin{pmatrix}
Z_{t}^{i1}-\widetilde{Z}_{t}^{i1}\\
Z_{t}^{i2}-\widetilde{Z}_{t}^{i2}\\
\vdots\\
Z_{t}^{iN}-\widetilde{Z}_{t}^{iN}%
\end{pmatrix}
=%
\begin{pmatrix}
0\\
0\\
\vdots\\
0
\end{pmatrix}
.
\]
By Assumption \ref{general_assumption}, $e_{j}^{\ast}\mathbb{E}\left[
W_{t+1}|\mathcal{F}_{t}\right]  \neq0,\,j\in\left\{  1,2,...,N\right\}  $.
Hence we have%
\[%
\begin{pmatrix}
1 & 0 & \cdots & 0\\
0 & 1 & \cdots & 0\\
\vdots & \vdots & \ddots & \vdots\\
0 & 0 & \cdots & 1
\end{pmatrix}%
\begin{pmatrix}
Z_{t}^{i1}-\widetilde{Z}_{t}^{i1}\\
Z_{t}^{i2}-\widetilde{Z}_{t}^{i2}\\
\vdots\\
Z_{t}^{iN}-\widetilde{Z}_{t}^{iN}%
\end{pmatrix}
=%
\begin{pmatrix}
\left(  \mathbb{E}\left[  W_{t+1}|\mathcal{F}_{t}\right]  \right)  ^{\ast}\\
\left(  \mathbb{E}\left[  W_{t+1}|\mathcal{F}_{t}\right]  \right)  ^{\ast}\\
\vdots\\
\left(  \mathbb{E}\left[  W_{t+1}|\mathcal{F}_{t}\right]  \right)  ^{\ast}%
\end{pmatrix}%
\begin{pmatrix}
Z_{t}^{i1}-\widetilde{Z}_{t}^{i1}\\
Z_{t}^{i2}-\widetilde{Z}_{t}^{i2}\\
\vdots\\
Z_{t}^{iN}-\widetilde{Z}_{t}^{iN}%
\end{pmatrix}
.
\]
Since $\sum_{j=1}^{N}e_{j}^{\ast}\mathbb{E}\left[  W_{t+1}|\mathcal{F}%
_{t}\right]  =1$, the solutions to the above equations are%
\[
Z_{t}^{i1}-\widetilde{Z}_{t}^{i1}=Z_{t}^{i2}-\widetilde{Z}_{t}^{i2}%
=\cdots=Z_{t}^{iN}-\widetilde{Z}_{t}^{iN}.
\]
Let\thinspace$C_{t}^{i}=Z_{t}^{i1}-\widetilde{Z}_{t}^{i1}$ and $C_{t}%
=\sum_{i=1}^{K}C_{t}^{i}e_{i}$. Then, we obtain $Z_{t}=\widetilde{Z}_{t}%
+C_{t}\mathbf{1}_{N}^{\ast}\mathbf{.}$

\textbf{Step 2.} (ii) $\Rightarrow$ (iii):

Suppose that for any $i\in\left\{  1,2,...,K\right\}  $,
\begin{equation}
Z_{t}^{i1}-\widetilde{Z}_{t}^{i1}=Z_{t}^{i2}-\widetilde{Z}_{t}^{i2}%
=\cdots=Z_{t}^{iN}-\widetilde{Z}_{t}^{iN}. \label{equivalent-1}%
\end{equation}
(\ref{equivalent-1}) can be rewritten as\ $Z_{t}^{ij}-Z_{t}^{iN}%
=\widetilde{Z}_{t}^{ij}-\widetilde{Z}_{t}^{iN},$ $j\in\left\{
1,2,...,N-1\right\}  $. Thus,%
\[%
\begin{pmatrix}
1 & 0 & \cdots & 0 & -1\\
0 & 1 & \cdots & 0 & -1\\
\vdots & \vdots & \ddots & \vdots & -1\\
0 & 0 & \cdots & 1 & -1
\end{pmatrix}%
\begin{pmatrix}
Z_{t}^{i1}\\
Z_{t}^{i2}\\
\vdots\\
Z_{t}^{id}%
\end{pmatrix}
=%
\begin{pmatrix}
1 & 0 & \cdots & 0 & -1\\
0 & 1 & \cdots & 0 & -1\\
\vdots & \vdots & \ddots & \vdots & -1\\
0 & 0 & \cdots & 1 & -1
\end{pmatrix}%
\begin{pmatrix}
\widetilde{Z}_{t}^{i1}\\
\widetilde{Z}_{t}^{i2}\\
\vdots\\
\widetilde{Z}_{t}^{id}%
\end{pmatrix}
.
\]
The result is obvious.

\textbf{Step 3.} (iii) $\Rightarrow$ (i):

If (iii) holds, then $Z_{t}^{i1}-\widetilde{Z}_{t}^{i1}=Z_{t}^{i2}%
-\widetilde{Z}_{t}^{i2}=\cdots=Z_{t}^{iN}-\widetilde{Z}_{t}^{iN},$ for
$i\in\left\{  1,2,...,K\right\}  $ which implies that $Z_{t}M_{t+1}%
=\widetilde{Z}_{t}M_{t+1}$. This completes the proof.
\end{proof}

Set%
\begin{align*}
\overline{L}  &  =\left(  N-1\right)  \max\limits_{t\in\left\{
0,...,T-1\right\}  ,k\in\left\{  1,...,N-1\right\}  ,w\in\Omega}\left(
1-P_{t}^{k}\left(  \omega\right)  \right)  P_{t}^{k}\left(  \omega\right)  ,\\
\underline{L}  &  =\frac{1}{2}\left(  N-1\right)  ^{-1}\min\limits_{t\in
\left\{  0,...,T-1\right\}  ,k\in\left\{  1,...,N\right\}  ,w\in\Omega}%
P_{t}^{k}\left(  \omega\right)  .
\end{align*}

\begin{proposition}
\label{norm equality}For any $Z_{t}\in L\left(  \mathcal{F}_{t};\mathbb{R}%
^{K\times N}\right)  $,
\[
\underline{L}\mathbb{E}\left[  \left\Vert Z_{t}\widetilde{I}\right\Vert
^{2}\right]  \leq\mathbb{E}\left[  \left\vert Z_{t}M_{t+1}\right\vert
^{2}\right]  \leq\overline{L}\mathbb{E}\left[  \left\Vert Z_{t}\widetilde{I}%
\right\Vert ^{2}\right]  .
\]

\end{proposition}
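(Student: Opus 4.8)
The plan is to fix $t$, split $Z_t$ into its rows, condition on $\mathcal{F}_t$, and reduce both inequalities to two-sided spectral bounds on the $\mathcal{F}_t$-conditional covariance matrix of $W_{t+1}$.

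Write $Z_t$ in terms of its rows $Z_t^{i}\in L(\mathcal{F}_t;\mathbb{R}^{1\times N})$, $i=1,\ldots,K$, so that $|Z_tM_{t+1}|^2=\sum_{i=1}^{K}(Z_t^{i}M_{t+1})^2$ and $\|Z_t\widetilde{I}\|^2=\sum_{i=1}^{K}|Z_t^{i}\widetilde{I}|^2$. Since $Z_t^{i}$ and $P_t$ are $\mathcal{F}_t$-measurable and $M_{t+1}=W_{t+1}-\mathbb{E}[W_{t+1}|\mathcal{F}_t]$, and since $Z_t^{i}W_{t+1}=\sum_{j=1}^{N}Z_t^{ij}1_{\{W_{t+1}=e_j\}}$ with conditional probabilities $P_t^j$, one gets $\mathbb{E}[(Z_t^{i}M_{t+1})^2|\mathcal{F}_t]=\sum_{j=1}^{N}P_t^j(Z_t^{ij})^2-\big(\sum_{j=1}^{N}P_t^jZ_t^{ij}\big)^2$. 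By Lemma \ref{m equality} (taking $C_t=Z_t^{iN}$ in (ii)) we may replace $Z_t^{i}$ by the equivalent row $Z_t^{i}-Z_t^{iN}\mathbf{1}_N^{\ast}$, which changes neither $Z_t^{i}M_{t+1}$ (because $\mathbf{1}_N^{\ast}M_{t+1}=0$) nor $Z_t^{i}\widetilde{I}$ (because $\mathbf{1}_N^{\ast}\widetilde{I}=0$), whose last coordinate vanishes and whose first $N-1$ coordinates are exactly the entries of the row $v_i:=Z_t^{i}\widetilde{I}$. Substituting gives $\mathbb{E}[(Z_t^{i}M_{t+1})^2|\mathcal{F}_t]=v_i\Sigma_tv_i^{\ast}$, where $\Sigma_t:=\mathrm{diag}(P_t^1,\ldots,P_t^{N-1})-\widehat{P}_t\widehat{P}_t^{\ast}$ with $\widehat{P}_t:=(P_t^1,\ldots,P_t^{N-1})^{\ast}$ is the ($\mathcal{F}_t$-conditional) covariance matrix of $(W_{t+1}^1,\ldots,W_{t+1}^{N-1})$; it is symmetric and $\mathcal{F}_t$-measurable. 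Taking $\mathcal{F}_t$-conditional expectations, then expectations, and invoking the Rayleigh bounds $\lambda_{\min}(\Sigma_t)|v_i|^2\le v_i\Sigma_tv_i^{\ast}\le\lambda_{\max}(\Sigma_t)|v_i|^2$, it suffices to prove the pointwise estimates $\underline{L}\le\lambda_{\min}(\Sigma_t)$ and $\lambda_{\max}(\Sigma_t)\le\overline{L}$.

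Since all $P_t^j>0$ by Assumption \ref{general_assumption}, $v_i\Sigma_tv_i^{\ast}$ is the conditional variance of a random variable attaining the values $(v_i)_1,\ldots,(v_i)_{N-1},0$ with positive probabilities, hence strictly positive unless $v_i=0$; so $\Sigma_t$ is positive definite, all eigenvalues of $\Sigma_t$ and of $\Sigma_t^{-1}$ are positive, and consequently $\lambda_{\max}(\Sigma_t)\le\mathrm{tr}(\Sigma_t)$ and $\lambda_{\min}(\Sigma_t)\ge 1/\mathrm{tr}(\Sigma_t^{-1})$. The upper bound is then immediate: $\mathrm{tr}(\Sigma_t)=\sum_{j=1}^{N-1}P_t^j(1-P_t^j)\le(N-1)\max_{k\in\{1,\ldots,N-1\}}(1-P_t^k)P_t^k\le\overline{L}$. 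For the lower bound I would invert $\Sigma_t$ explicitly: by the Sherman--Morrison formula — applicable since $1-\widehat{P}_t^{\ast}\mathrm{diag}(\widehat{P}_t)^{-1}\widehat{P}_t=1-\sum_{j=1}^{N-1}P_t^j=P_t^N>0$ — or by direct verification,
\[
\Sigma_t^{-1}=\mathrm{diag}\big(1/P_t^1,\ldots,1/P_t^{N-1}\big)+(P_t^N)^{-1}\,\mathbf{1}_{(N-1)\times(N-1)}\;\big(=\widetilde{I}^{\ast}\,\mathrm{diag}(P_t^1,\ldots,P_t^N)^{-1}\,\widetilde{I}\big).
\]
Hence $\mathrm{tr}(\Sigma_t^{-1})=\sum_{j=1}^{N-1}1/P_t^j+(N-1)/P_t^N\le 2(N-1)/\min_{k\in\{1,\ldots,N\}}P_t^k$, so $\lambda_{\min}(\Sigma_t)\ge\min_kP_t^k/(2(N-1))\ge\underline{L}$. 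Summing the resulting $\omega$-wise inequalities over $i=1,\ldots,K$ and taking expectations yields the claim.

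I expect the explicit inversion of $\Sigma_t$ to be the one genuinely non-routine step — it is what produces the clean constant $\underline{L}$ — while the rest is bookkeeping: the reduction to the scalar conditional second moment (where the only point needing care is that passing to the representative $Z_t^{i}-Z_t^{iN}\mathbf{1}_N^{\ast}$ leaves both $Z_t^{i}M_{t+1}$ and $Z_t^{i}\widetilde{I}$ untouched) together with the elementary facts $\lambda_{\min}\le$ Rayleigh quotient $\le\lambda_{\max}$, $\lambda_{\max}\le\mathrm{tr}$, and $\lambda_{\min}\ge1/\mathrm{tr}(\cdot^{-1})$ for positive definite matrices. Finally, since $\overline{L}$ and $\underline{L}$ arise by maximizing, resp.\ minimizing, the local quantities $(N-1)\max_{k\le N-1}(1-P_t^k(\omega))P_t^k(\omega)$ and $\min_{k\le N}P_t^k(\omega)/(2(N-1))$ over all $t$ and $\omega$, the pointwise bounds just obtained pass to the stated global constants under $\mathbb{E}$.
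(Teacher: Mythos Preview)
Your proof is correct. Both you and the paper begin with the same reduction: work row by row, replace each row by its $\thicksim_{M_{t+1}}$-equivalent representative whose last entry is zero, so that the first $N-1$ entries are exactly $Z_t^{i}\widetilde{I}$, and then bound the $\mathcal{F}_t$-conditional second moment.

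From that point on the two arguments diverge. The paper stays elementary: for the upper bound it expands $\mathbb{E}[(\widetilde{Z}_tM_{t+1})^2\mid\mathcal{F}_t]$ as a sum over the $N$ possible outcomes of $W_{t+1}$, applies Cauchy--Schwarz to each bracket, and regroups to get the coefficient $(N-1)(1-P_t^k)P_t^k$ in front of $(\widetilde{Z}_t^k)^2$; for the lower bound it performs an ad hoc linear change of variables $\xi_j=\widetilde{Z}_t^j-\sum_l P_t^l\widetilde{Z}_t^l$, rewrites the conditional second moment as $\sum_{i\neq k}P_t^i\xi_i^2+P_t^k\xi_k^2+P_t^N(\widetilde{Z}_t^k-\xi_k)^2$, and then uses $\xi_k^2+(\widetilde{Z}_t^k-\xi_k)^2\ge\tfrac{1}{2}(\widetilde{Z}_t^k)^2$. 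You instead recognise the conditional second moment as the quadratic form $v_i\Sigma_t v_i^{\ast}$ with $\Sigma_t$ the $(N-1)\times(N-1)$ conditional covariance of $W_{t+1}$, reduce both inequalities to spectral bounds, use $\lambda_{\max}\le\mathrm{tr}(\Sigma_t)$ for the upper bound, and invert $\Sigma_t$ via Sherman--Morrison to bound $\lambda_{\min}\ge 1/\mathrm{tr}(\Sigma_t^{-1})$. Your route is more structural and makes transparent \emph{why} the constants $\overline{L}$ and $\underline{L}$ take the form they do (they are simply trace bounds on $\Sigma_t$ and $\Sigma_t^{-1}$); the paper's route avoids any matrix machinery but the change of variables for the lower bound looks less motivated. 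Both deliver exactly the same constants.
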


\begin{proof}
Without loss of generality, we only consider the case of $K=1$. For a fixed
$t\in\left\{  0,1,...,T-1\right\}  $ and for any given $Z_{t}\in L\left(
\mathcal{F}_{t};\mathbb{R}^{1\times N}\right)  $,\ define $\widetilde{Z}%
_{t}=\left(  \widetilde{Z}_{t}^{1},\widetilde{Z}_{t}^{2},...,\widetilde{Z}%
_{t}^{N-1},\widetilde{Z}_{t}^{N}\right)  $, where $\left(  \widetilde{Z}%
_{t}^{1},\widetilde{Z}_{t}^{2},...,\widetilde{Z}_{t}^{N-1}\right)
=Z_{t}\widetilde{I}$ and $\widetilde{Z}_{t}^{N}=0$. by Lemma \ref{m equality},
we have $Z_{t}\thicksim_{M_{t+1}}\widetilde{Z}_{t}$ which leads to
$Z_{t}M_{t+1}=\widetilde{Z}_{t}M_{t+1}$. Since $\widetilde{Z}_{t}^{N}=0$ and
$P_{t}^{i}=e_{i}^{\ast}\mathbb{E}\left[  W_{t+1}|\mathcal{F}_{t}\right]  $ for
$i\in\left\{  1,2,...,N\right\}  $, we have%
\[%
\begin{array}
[c]{cl}
& \mathbb{E}\left[  \left(  Z_{t}M_{t+1}\right)  ^{2}|\mathcal{F}_{t}\right]
\\
= & \mathbb{E}\left[  \left(  \widetilde{Z}_{t}M_{t+1}\right)  ^{2}%
|\mathcal{F}_{t}\right] \\
= & \sum_{i=1}^{N}e_{i}^{\ast}\mathbb{E}\left[  W_{t+1}|\mathcal{F}%
_{t}\right]  \left[  \widetilde{Z}_{t}\left(  e_{i}-\mathbb{E}\left[
W_{t+1}|\mathcal{F}_{t}\right]  \right)  \right]  ^{2}\\
= & \sum_{i=1}^{N-1}e_{i}^{\ast}\mathbb{E}\left[  W_{t+1}|\mathcal{F}%
_{t}\right]  \left[  \widetilde{Z}_{t}\left(  e_{i}-\mathbb{E}\left[
W_{t+1}|\mathcal{F}_{t}\right]  \right)  \right]  ^{2}+e_{N}^{\ast}%
\mathbb{E}\left[  W_{t+1}|\mathcal{F}_{t}\right]  \left(  \widetilde{Z}%
_{t}\mathbb{E}\left[  W_{t+1}|\mathcal{F}_{t}\right]  \right)  ^{2}\\
\leq & \sum\limits_{i=1}^{N-1}\left(  N-1\right)  P_{t}^{i}\left[  \left(
1-P_{t}^{i}\right)  ^{2}\left(  \widetilde{Z}_{t}^{i}\right)  ^{2}%
+\sum\limits_{j=1,j\neq i}^{N-1}\left(  P_{t}^{j}\right)  ^{2}\left(
\widetilde{Z}_{t}^{j}\right)  ^{2}\right]  +\left(  N-1\right)  P_{t}%
^{N}\left[  \sum\limits_{j=1}^{N-1}\left(  P_{t}^{j}\right)  ^{2}\left(
\widetilde{Z}_{t}^{j}\right)  ^{2}\right] \\
= & \left(  N-1\right)  \sum\limits_{k=1}^{N-1}\left[  P_{t}^{k}\left(
1-P_{t}^{k}\right)  ^{2}+\sum\limits_{i=1,i\neq k}^{N}P_{t}^{i}\left(
P_{t}^{k}\right)  ^{2}\right]  \left(  \widetilde{Z}_{t}^{k}\right)  ^{2}\\
= & \left(  N-1\right)  \sum\limits_{k=1}^{N-1}\left(  1-P_{t}^{k}\right)
P_{t}^{k}\left(  \widetilde{Z}_{t}^{k}\right)  ^{2}.
\end{array}
\]
Set
\[
\overline{L}_{t}=\left(  N-1\right)  \max_{k\in\left\{  1,2,...,d-1\right\}
,\omega\in\Omega}\left(  1-P_{t}^{k}\left(  \omega\right)  \right)  P_{t}%
^{k}\left(  \omega\right)  .
\]
Then, we obtain%
\[%
\begin{array}
[c]{cccl}
& \mathbb{E}\left[  \left(  Z_{t}M_{t+1}\right)  ^{2}\right]  & = &
\mathbb{E}\left[  \left(  \widetilde{Z}_{t}M_{t+1}\right)  ^{2}\right]
=\mathbb{E}\left[  \mathbb{E}\left[  \left(  \widetilde{Z}_{t}M_{t+1}\right)
^{2}|\mathcal{F}_{t}\right]  \right] \\
\leq & \overline{L}_{t}\mathbb{E}\left[  \left\vert \widetilde{Z}%
_{t}\right\vert ^{2}\right]  & = & \overline{L}_{t}\mathbb{E}\left[
\left\vert Z_{t}\widetilde{I}\right\vert ^{2}\right]  .
\end{array}
\]

In order to prove that there exists a constant $\underline{L}_{t}>0$ such that
$\mathbb{E}\left[  \left(  Z_{t}M_{t+1}\right)  ^{2}\right]  \geq
\underline{L}_{t}\mathbb{E}\left[  \left\vert Z_{t}\widetilde{I}\right\vert
^{2}\right]  $. We introduce variables $\xi_{i}$ for $i\in\left\{
1,2,...,N-1\right\}  $ such that%
\[%
\begin{pmatrix}
\left(  1-P_{t}^{1}\right)  & -P_{t}^{2} & \cdots & -P_{t}^{N-1}\\
-P_{t}^{1} & \left(  1-P_{t}^{2}\right)  & \cdots & -P_{t}^{N-1}\\
\vdots & \vdots & \ddots & \vdots\\
-P_{t}^{1} & -P_{t}^{2} & \cdots & 1-P_{t}^{N-1}%
\end{pmatrix}%
\begin{pmatrix}
\widetilde{Z}_{t}^{1}\\
\widetilde{Z}_{t}^{2}\\
\vdots\\
\widetilde{Z}_{t}^{N-1}%
\end{pmatrix}
=%
\begin{pmatrix}
\xi_{1}\\
\xi_{2}\\
\vdots\\
\xi_{N-1}%
\end{pmatrix}
.
\]
For any $k\in\left\{  1,2,...,N-1\right\}  $, $\sum_{j=1}^{N-1}P_{t}%
^{j}\widetilde{Z}_{t}^{j}=\widetilde{Z}_{t}^{k}-\xi_{k}$. Then we have%
\[%
\begin{array}
[c]{cl}
& \mathbb{E}\left[  \left(  \widetilde{Z}_{t}M_{t+1}\right)  ^{2}%
|\mathcal{F}_{t}\right] \\
= & \sum\limits_{i=1}^{N-1}e_{i}^{\ast}\mathbb{E}\left[  W_{t+1}%
|\mathcal{F}_{t}\right]  \left[  \widetilde{Z}_{t}\left(  e_{i}-\mathbb{E}%
\left[  W_{t+1}|\mathcal{F}_{t}\right]  \right)  \right]  ^{2}+e_{N}^{\ast
}\mathbb{E}\left[  W_{t+1}|\mathcal{F}_{t}\right]  \left(  \widetilde{Z}%
_{t}\mathbb{E}\left[  W_{t+1}|\mathcal{F}_{t}\right]  \right)  ^{2}\\
= & \sum\limits_{i=1,i\neq k}^{N-1}P_{t}^{i}\xi_{i}^{2}+P_{t}^{k}\xi_{k}%
^{2}+P_{t}^{N}\left(  \widetilde{Z}_{t}^{k}-\xi_{k}\right)  ^{2}\\
\geq & \sum\limits_{i=1,i\neq k}^{N-1}P_{t}^{i}\xi_{i}^{2}+\min\left\{
P_{t}^{k},P_{t}^{N}\right\}  \times\left[  \xi_{k}^{2}+\left(  \widetilde{Z}%
_{t}^{k}-\xi_{k}\right)  ^{2}\right] \\
\geq & \min\left\{  P_{t}^{k},P_{t}^{N}\right\}  \times\frac{1}{2}\left(
\widetilde{Z}_{t}^{k}\right)  ^{2}.\\
\geq & \frac{1}{2}\left(  N-1\right)  ^{-1}\sum\limits_{k=1}^{N-1}\min\left\{
P_{t}^{k},P_{t}^{N}\right\}  \times\left(  \widetilde{Z}_{t}^{k}\right)
^{2}\\
\geq & \frac{1}{2}\left(  N-1\right)  ^{-1}\min\limits_{k\in\left\{
1,2,...,N\right\}  }\left\{  P_{t}^{k}\right\}  \times\sum\limits_{k=1}%
^{N-1}\left(  \widetilde{Z}_{t}^{k}\right)  ^{2}.
\end{array}
\]
Let%
\[
\underline{L}_{t}=\frac{1}{2}\left(  N-1\right)  ^{-1}\min_{k\in\left\{
1,2,...,N\right\}  ,w\in\Omega}P_{t}^{k}\left(  \omega\right)  ,
\]
It is obvious that $\underline{L}_{t}>0$ and $\mathbb{E}\left[  \left(
Z_{t}M_{t+1}\right)  ^{2}\right]  \geq\underline{L}_{t}\mathbb{E}\left[
\left\vert Z_{t}\widetilde{I}\right\vert ^{2}\right]  $. Thus, we have%
\[
\underline{L}\mathbb{E}\left[  \left\vert Z_{t}\widetilde{I}\right\vert
^{2}\right]  \leq\mathbb{E}\left[  \left(  Z_{t}M_{t+1}\right)  ^{2}\right]
\leq\,\overline{L}\mathbb{E}\left[  \left\vert Z_{t}\widetilde{I}\right\vert
^{2}\right]  .
\]
This completes the proof.
\end{proof}

The following representation theorem is from Corollary 1 in \cite{ce10+}.

\begin{theorem}
\label{mrt_result_cohen_c}For any $\mathbb{R}^{K}$ valued and $\mathcal{F}%
_{t+1}$-measurable random variable $Y$, there exists an $\mathcal{F}_{t}%
$-measurable random variable $Z_{t}\in L\left(  \mathcal{F}_{t};\mathbb{R}%
^{K\times N}\right)  $ such that $Y-\mathbb{E}\left[  Y|\mathcal{F}%
_{t}\right]  =Z_{t}M_{t+1}$. This variable is unique up to equivalence
$\thicksim_{M_{t+1}}$.
\end{theorem}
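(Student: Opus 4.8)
The plan is to exploit the very special structure of the filtration. Since $\mathcal{F}_{t+1}=\mathcal{F}_t\vee\sigma(W_{t+1})$ and $W_{t+1}$ takes values only in the finite set $\left\{e_1,\dots,e_N\right\}$, an $\mathcal{F}_{t+1}$-measurable random variable is really ``$\mathcal{F}_t$-measurable on each atom $\left\{W_{t+1}=e_j\right\}$''. Making this precise is the one genuinely measure-theoretic point, and the step I expect to be the main (indeed the only) obstacle. I would first show that $\mathcal{F}_{t+1}$ consists exactly of the sets $\bigcup_{j=1}^{N}\big(G_j\cap\left\{W_{t+1}=e_j\right\}\big)$ with $G_j\in\mathcal{F}_t$: this collection is a $\sigma$-algebra, it contains $\mathcal{F}_t$ (take all $G_j=G$) and $\sigma(W_{t+1})$ (take $G_j\in\left\{\emptyset,\Omega\right\}$), and it is clearly contained in $\mathcal{F}_t\vee\sigma(W_{t+1})$. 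Consequently indicators of $\mathcal{F}_{t+1}$-sets, hence by linearity and monotone limits every $\mathcal{F}_{t+1}$-measurable $Y$ (which, since $\mathcal{F}_{t+1}$ is finite up to null sets, is automatically bounded, so all conditional expectations below are well defined), can be written as
\[
Y=\sum_{j=1}^{N}Y_j\,1_{\left\{W_{t+1}=e_j\right\}},\qquad Y_j\in L(\mathcal{F}_t;\mathbb{R}^{K});
\]
one may in fact take $Y_j=(P_t^j)^{-1}\,\mathbb{E}\!\left[\,Y\,1_{\left\{W_{t+1}=e_j\right\}}\mid\mathcal{F}_t\right]$, which is well defined because $P_t^j>0$ by Assumption \ref{general_assumption}.

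Given this decomposition, the construction of $Z_t$ is immediate. Because $1_{\left\{W_{t+1}=e_j\right\}}=e_j^{\ast}W_{t+1}$ is the $j$-th coordinate of $W_{t+1}$, the matrix $Z_t:=(Y_1\mid\cdots\mid Y_N)\in L(\mathcal{F}_t;\mathbb{R}^{K\times N})$ whose $j$-th column is $Y_j$ satisfies $Y=Z_tW_{t+1}$. Since $Z_t$ is $\mathcal{F}_t$-measurable, taking $\mathcal{F}_t$-conditional expectations gives $\mathbb{E}[Y\mid\mathcal{F}_t]=Z_t\,\mathbb{E}[W_{t+1}\mid\mathcal{F}_t]$, and subtracting,
\[
Y-\mathbb{E}[Y\mid\mathcal{F}_t]=Z_t\big(W_{t+1}-\mathbb{E}[W_{t+1}\mid\mathcal{F}_t]\big)=Z_tM_{t+1},
\]
which is the asserted representation.

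It remains to address uniqueness up to $\thicksim_{M_{t+1}}$. If $Z_t$ and $\widetilde{Z}_t$ in $L(\mathcal{F}_t;\mathbb{R}^{K\times N})$ both satisfy $Y-\mathbb{E}[Y\mid\mathcal{F}_t]=Z_tM_{t+1}=\widetilde{Z}_tM_{t+1}$, then directly $Z_tM_{t+1}=\widetilde{Z}_tM_{t+1}$, which is precisely $Z_t\thicksim_{M_{t+1}}\widetilde{Z}_t$; conversely, by Lemma \ref{m equality} every member of the $\thicksim_{M_{t+1}}$-class of $Z_t$ has the form $Z_t+C_t\mathbf{1}_N^{\ast}$ with $C_t\in L(\mathcal{F}_t;\mathbb{R}^{K})$, and since $\mathbf{1}_N^{\ast}M_{t+1}=\mathbf{1}_N^{\ast}W_{t+1}-\mathbf{1}_N^{\ast}\mathbb{E}[W_{t+1}\mid\mathcal{F}_t]=1-1=0$, such a matrix again represents $Y-\mathbb{E}[Y\mid\mathcal{F}_t]$. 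Hence the set of representing $Z_t$ is exactly one $\thicksim_{M_{t+1}}$-class, which is the claimed uniqueness. Apart from the structural description of $\mathcal{F}_{t+1}$ in the first paragraph, every step above is routine linear algebra combined with elementary properties of conditional expectation.
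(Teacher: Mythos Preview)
Your proof is correct. Note, however, that the paper itself does not prove this theorem: it is quoted verbatim as Corollary~1 from \cite{ce10+}, so there is no ``paper's own proof'' to compare with. What the paper does supply is Lemma~\ref{z representation}, which under Assumption~\ref{general_assumption} exhibits the very same representative
\[
Z_t=\sum_{i=1}^{N}\frac{\mathbb{E}\!\left[Y\,1_{\{W_{t+1}=e_i\}}\mid\mathcal{F}_t\right]}{P_t^i}\,e_i^{\ast}
\]
that you write down. The route taken there is slightly different from yours: instead of first arguing that $\mathcal{F}_{t+1}=\mathcal{F}_t\vee\sigma(W_{t+1})$ forces a decomposition $Y=\sum_j Y_j 1_{\{W_{t+1}=e_j\}}$ and then reading off $Z_t$, the paper starts from the target identity $Z_tM_{t+1}=Y-\mathbb{E}[Y\mid\mathcal{F}_t]$, multiplies by $1_{\{W_{t+1}=e_j\}}$, conditions on $\mathcal{F}_t$, and obtains a linear system~(\ref{z_representation_eq}) for the entries of $Z_t$, which it then checks is solved by the formula above. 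Your structural argument is more conceptual and gives existence and the explicit formula in one stroke (and in fact yields $Y=Z_tW_{t+1}$, a stronger identity); the paper's computation is more hands-on but avoids the measure-theoretic description of $\mathcal{F}_{t+1}$. The uniqueness argument you give---that two representatives agree on $M_{t+1}$ by definition, and conversely every member of a $\thicksim_{M_{t+1}}$-class represents---is exactly what is implicit in the paper's treatment via Lemma~\ref{m equality}.
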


Under Assumption \ref{general_assumption}, we can obtain an explicit form of
$Z_{t}$ in the following lemma.

\begin{lemma}
\label{z representation}Under Assumption \ref{general_assumption}, for any
$\mathbb{R}^{K}$ valued and $\mathcal{F}_{t+1}$-measurable random variable $Y$,%

\[
Z_{t}=\sum_{i=1}^{N}\frac{\mathbb{E}\left[  Y1_{\left\{  W_{t+1}%
=e_{i}\right\}  }|\mathcal{F}_{t}\right]  }{P_{t}^{i}}e_{i}^{\ast}.
\]

satisfies $Y-\mathbb{E}\left[  Y|\mathcal{F}_{t}\right]  =Z_{t}M_{t+1}$.
\end{lemma}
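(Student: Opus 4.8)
The plan is to verify directly that the proposed $Z_t$ satisfies $Y-\mathbb{E}[Y|\mathcal{F}_t]=Z_tM_{t+1}$ by evaluating both sides on each of the $N$ events $\{W_{t+1}=e_i\}$, which together form an $\mathcal{F}_{t+1}$-measurable partition of $\Omega$. First I would recall that $M_{t+1}=W_{t+1}-\mathbb{E}[W_{t+1}|\mathcal{F}_t]$, so on the event $\{W_{t+1}=e_i\}$ we have $M_{t+1}=e_i-\mathbb{E}[W_{t+1}|\mathcal{F}_t]$, and hence $Z_tM_{t+1}=\sum_{j=1}^N\frac{\mathbb{E}[Y1_{\{W_{t+1}=e_j\}}|\mathcal{F}_t]}{P_t^j}e_j^{\ast}(e_i-\mathbb{E}[W_{t+1}|\mathcal{F}_t])$. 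Since $e_j^{\ast}e_i=\delta_{ij}$ and $e_j^{\ast}\mathbb{E}[W_{t+1}|\mathcal{F}_t]=P_t^j$, this collapses to $\frac{\mathbb{E}[Y1_{\{W_{t+1}=e_i\}}|\mathcal{F}_t]}{P_t^i}-\sum_{j=1}^N\mathbb{E}[Y1_{\{W_{t+1}=e_j\}}|\mathcal{F}_t]$. The second sum is exactly $\mathbb{E}[Y|\mathcal{F}_t]$ because the events $\{W_{t+1}=e_j\}$ partition $\Omega$.

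It then remains to identify $\frac{\mathbb{E}[Y1_{\{W_{t+1}=e_i\}}|\mathcal{F}_t]}{P_t^i}$ with $Y$ on the event $\{W_{t+1}=e_i\}$. This is the one genuinely substantive point: since $Y$ is $\mathcal{F}_{t+1}$-measurable and $\mathcal{F}_{t+1}$ is generated by $W$ up to time $t+1$, on the atom-type event $\{W_{t+1}=e_i\}$ the random variable $Y$ is measurable with respect to $\mathcal{F}_t$ enlarged by the single event $\{W_{t+1}=e_i\}$; more concretely, $Y1_{\{W_{t+1}=e_i\}}$ can be written as $g^i 1_{\{W_{t+1}=e_i\}}$ for some $\mathcal{F}_t$-measurable $g^i$, and then $\mathbb{E}[Y1_{\{W_{t+1}=e_i\}}|\mathcal{F}_t]=g^i\mathbb{E}[1_{\{W_{t+1}=e_i\}}|\mathcal{F}_t]=g^iP_t^i$, so the quotient equals $g^i$, which in turn equals $Y$ precisely on $\{W_{t+1}=e_i\}$. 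Assumption \ref{general_assumption} guarantees $P_t^i>0$ for every $\omega$, so the division is legitimate everywhere.

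Combining the two steps, on each event $\{W_{t+1}=e_i\}$ we get $Z_tM_{t+1}=Y-\mathbb{E}[Y|\mathcal{F}_t]$, and since these events exhaust $\Omega$ the identity holds $P$-almost surely, i.e. for every $\omega$. I would also remark that $Z_t$ as written is manifestly $\mathcal{F}_t$-measurable and $\mathbb{R}^{K\times N}$-valued, so it lies in $L(\mathcal{F}_t;\mathbb{R}^{K\times N})$, and that by Theorem \ref{mrt_result_cohen_c} it is the unique such representer up to $\thicksim_{M_{t+1}}$. The main obstacle is purely the bookkeeping in the second step — writing $Y$ restricted to $\{W_{t+1}=e_i\}$ as an $\mathcal{F}_t$-measurable function times the indicator — but this is routine given the structure of the filtration generated by the finite-state process $W$.
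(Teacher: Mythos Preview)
Your argument is correct and takes a genuinely different, more direct route than the paper. The paper first \emph{derives} a system of $N$ linear algebraic equations for the components $Z_t^1,\dots,Z_t^N$ by multiplying the desired identity $Y-\mathbb{E}[Y|\mathcal{F}_t]=Z_tM_{t+1}$ by each indicator $1_{\{W_{t+1}=e_j\}}$ and taking $\mathcal{F}_t$-conditional expectations; it then checks that the proposed formula solves this system and asserts that any solution of the system yields a solution of the original identity. Your proof bypasses the intermediate system entirely: you fix the event $\{W_{t+1}=e_i\}$, compute $Z_tM_{t+1}$ there, and match it to $Y-\mathbb{E}[Y|\mathcal{F}_t]$ using the key observation that $Y1_{\{W_{t+1}=e_i\}}=g^i1_{\{W_{t+1}=e_i\}}$ with $g^i$ $\mathcal{F}_t$-measurable (a consequence of $\mathcal{F}_{t+1}=\sigma(\mathcal{F}_t,W_{t+1})$ and the finiteness of the state space). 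The paper's approach has the expository merit of showing how one would \emph{discover} the formula by solving for $Z_t$, whereas your approach is shorter and makes the sufficiency step (which the paper leaves somewhat implicit) completely transparent. Both rely on Assumption~\ref{general_assumption} at the same place, namely to divide by $P_t^i$.
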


\begin{proof}
Without loss of generality, we suppose that $Y$ takes values in $\mathbb{R}$.
For this case, $Z_{t}=\left(  Z_{t}^{1},Z_{t}^{2},...,Z_{t}^{N}\right)
\in\mathbb{R}^{1\times N}$. Note that $\mathbb{E}\left[  Y_{t}|\mathcal{F}%
_{t}\right]  =\sum_{i=1}^{N}\mathbb{E}\left[  Y_{t}1_{\left\{  W_{t+1}%
=e_{i}\right\}  }|\mathcal{F}_{t}\right]  $. By the definition of $M_{t+1}$,
we get%
\[
\sum_{i=1}^{N}Z_{t}^{i}\left(  W_{t+1}^{i}-P_{t}^{i}\right)  =Y_{t}-\sum
_{i=1}^{N}\mathbb{E}\left[  Y_{t}1_{\left\{  W_{t+1}=e_{i}\right\}
}|\mathcal{F}_{t}\right]  .
\]
Multiplying $1_{\left\{  W_{t+1}=e_{j}\right\}  },$ $j\in\left\{
1,2,...,N\right\}  $ on both sides of the above equation and taking
$\mathcal{F}_{t}$-conditional expectation, we obtain the following algebraic
linear equations for $j\in\left\{  1,2,...,N\right\}  $:%

\begin{equation}%
\begin{array}
[c]{cl}
& Z_{t}^{j}\left(  1-P_{t}^{j}\right)  P_{t}^{j}-\sum_{i\neq j,i=1}^{N}%
Z_{t}^{i}P_{t}^{i}P_{t}^{j}\\
= & \mathbb{E}\left[  Y_{t}1_{\left\{  W_{t+1}=e_{j}\right\}  }|\mathcal{F}%
_{t}\right]  -\sum_{i=1}^{N}\mathbb{E}\left[  Y_{t}1_{\left\{  W_{t+1}%
=e_{i}\right\}  }|\mathcal{F}_{t}\right]  P_{t}^{j}.
\end{array}
\label{z_representation_eq}%
\end{equation}
If $\left\{  Z_{t}^{j}\right\}  _{j\in\left\{  1,2,...,N\right\}  }$ is the
(nonunique) solution to (\ref{z_representation_eq}), then $Z_{t}=\left(
Z_{t}^{1},Z_{t}^{2},...,Z_{t}^{N}\right)  $ satisfies $Y-\mathbb{E}\left[
Y|\mathcal{F}_{t}\right]  =Z_{t}M_{t+1}$. Moreover, if $\left\{  Z_{t}%
^{j}\right\}  _{j\in\left\{  1,2,...,N\right\}  }$ and $\left\{
\widetilde{Z}_{t}^{j}\right\}  _{j\in\left\{  1,2,...,N\right\}  }$ are two
solutions to (\ref{z_representation_eq}), then $Z_{t}$ and $\widetilde{Z}_{t}$
are $\thicksim_{M_{t+1}}$equivalence. We rewrite (\ref{z_representation_eq})
as%
\[
\mathbb{E}\left[  Y_{t}1_{\left\{  W_{t+1}=e_{j}\right\}  }|\mathcal{F}%
_{t}\right]  \left(  1-P_{t}^{j}\right)  -\sum_{i\neq j,i=1}^{N}%
\mathbb{E}\left[  Y_{t}1_{\left\{  W_{t+1}=e_{i}\right\}  }|\mathcal{F}%
_{t}\right]  P_{t}^{j}\text{ for }j\in\left\{  1,2,...,N\right\}  \text{.}%
\]
It is easy to check that $Z_{t}^{i}=\frac{\mathbb{E}\left[  Y_{t}1_{\left\{
W_{t+1}=e_{i}\right\}  }|\mathcal{F}_{t}\right]  }{P_{t}^{i}}$ is a solution
to (\ref{z_representation_eq}). Thus, $Z_{t}=\sum_{i=1}^{N}Z_{t}^{i}%
e_{i}^{\ast}$ satisfies $Y_{t}-\mathbb{E}\left[  Y_{t}|\mathcal{F}_{t}\right]
=Z_{t}M_{t+1}$. The proof is completed.
\end{proof}

\begin{remark}
Note that the transition probability $P_{t}^{i}$ appears in the above
expression of $Z_{t}$. In fact, the result is independent of $P_{t}^{i}$ since
they will be canceled by calculation. The value of $Z_{t}$ only depends on the
value of $Y_{t+1}$ in different state.
\end{remark}

For $i\in\left\{  1,2,...,N\right\}  $, define the map $\Lambda_{t}%
^{i}:L\left(  \mathcal{F}_{t+1};\mathbb{R}^{K}\right)  \rightarrow L\left(
\mathcal{F}_{t};\mathbb{R}^{K}\right)  $ by%
\begin{equation}
\Lambda_{t}^{i}\left(  \xi\right)  =\frac{\mathbb{E}\left[  \xi1_{\left\{
W_{t+1}=e_{i}\right\}  }|\mathcal{F}_{t}\right]  }{P_{t}^{i}}
\label{c2_eq_m_lambda}%
\end{equation}
where $\xi\in L\left(  \mathcal{F}_{t+1};\mathbb{R}^{K}\right)  $. Then,
$Z_{t}$ in the above Lemma can be written as $Z_{t}=\sum_{i=1}^{N}\Lambda
_{t}^{i}\left(  Y_{t+1}\right)  e_{i}^{\ast}$.

Now we formulate one kind of BS$\Delta$Es which will be used in the following
sections. For a given sequence $\left\{  U_{t}\right\}  _{t=0}^{T}$, we define
the difference operator $\Delta$ as%
\[
\Delta U_{t}=U_{t+1}-U_{t}.
\]
Consider the following BS$\Delta$E driven by $M$:%

\begin{equation}
\left\{
\begin{array}
[c]{rcl}%
\Delta Y_{t} & = & -f\left(  \omega,t+1,Y_{t+1},Z_{t+1}\right)  +Z_{t}%
M_{t+1},\\
Y_{T} & = & \eta,
\end{array}
\right.  \label{bsde_form1}%
\end{equation}
where $\eta$\ is $\mathcal{F}_{T}$ measurable $\mathbb{R}^{K}$-valued random
variables, $f:\Omega\times\left\{  1,2,...,T\right\}  \times\mathbb{R}%
^{K}\times\mathbb{R}^{K\times N}\longmapsto\mathbb{R}^{K}$ is $\mathcal{F}%
_{t}$-adapted mapping. Additionally we assume that $Z_{T}$ is not included in
function $f$, which means that the generator of BS$\Delta$E (\ref{bsde_form1})
at time $T$ is independent of $Z_{T}$.

\begin{remark}
For discrete time stochastic optimal control problems, the adjoint equations
should just take the form as in (\ref{bsde_form1}).
\end{remark}

\begin{assumption}
\label{BSDE-generator}For any $Z^{1}$ and $Z^{2}\in\mathcal{M}\left(
0,T-1;\mathbb{R}^{K\times N}\right)  $, if $Z^{1}\thicksim_{M}Z^{2}$, then
$f\left(  \omega,t,y,Z_{t}^{1}\right)  =f\left(  \omega,t,y,Z_{t}^{2}\right)
$ for any $\omega\in\Omega$, $t\in\left\{  1,2,...,T-1\right\}  $ and
$y\in\mathbb{R}^{K\times1}$.
\end{assumption}

We have the following theorem.

\begin{theorem}
\label{bsde_result_l}If $f$ satisfies Assumption \ref{BSDE-generator}, then
for any terminal condition $\eta\in L\left(  \mathcal{F}_{T};\mathbb{R}%
^{K}\right)  $,\ BS$\Delta$E (\ref{bsde_form1}) have a unique adapted solution
$\left(  Y,Z\right)  $. Here the uniqueness for $Y$ is in the sense of
indistinguishability and for $Z$ is in the sense of $\thicksim_{M}$ equivalence.
\end{theorem}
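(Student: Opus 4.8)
The plan is to solve the BS$\Delta$E (\ref{bsde_form1}) by backward induction on $t$, starting from the terminal condition $Y_T=\eta$ and using the martingale representation theorem (Theorem \ref{mrt_result_cohen_c}) at each step to produce $Z_t$. Concretely, suppose $Y_{t+1}\in L(\mathcal{F}_{t+1};\mathbb{R}^K)$ has already been constructed. Define $Z_{t}$ via the explicit formula of Lemma \ref{z representation} applied to the random variable $\eta_{t+1}:=Y_{t+1}+f(\omega,t+1,Y_{t+1},Z_{t+1})$; more precisely, first note that $Z_{t+1}$ was constructed at the previous stage, so $f(\omega,t+1,Y_{t+1},Z_{t+1})$ is a well-defined $\mathcal{F}_{t+1}$-measurable random variable (at the first step $t=T-1$ we use that $f$ does not depend on $Z_T$). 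Then set $Y_t:=\mathbb{E}[\eta_{t+1}\mid\mathcal{F}_t]$ and let $Z_t=\sum_{i=1}^N\Lambda_t^i(\eta_{t+1})e_i^\ast$, so that by Lemma \ref{z representation} we have $\eta_{t+1}-Y_t=Z_tM_{t+1}$, which upon rearranging is exactly $\Delta Y_t=-f(\omega,t+1,Y_{t+1},Z_{t+1})+Z_tM_{t+1}$. Running this recursion from $t=T-1$ down to $t=0$ produces an adapted pair $(Y,Z)$ solving (\ref{bsde_form1}).

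For uniqueness, I would take two solutions $(Y^1,Z^1)$ and $(Y^2,Z^2)$ and again argue by backward induction. Since $Y^1_T=Y^2_T=\eta$, the generator terms at time $T$ agree (here the hypothesis that $f$ is independent of $Z_T$ is used so that no comparison of $Z^1_T,Z^2_T$ is needed). Assuming $Y^1_{t+1}=Y^2_{t+1}$ and $Z^1_{t+1}\thicksim_{M}Z^2_{t+1}$ (for indices $\geq t+1$), Assumption \ref{BSDE-generator} gives $f(\omega,t+1,Y^1_{t+1},Z^1_{t+1})=f(\omega,t+1,Y^2_{t+1},Z^2_{t+1})$, so both solutions satisfy $\Delta Y^j_t=-g_{t+1}+Z^j_tM_{t+1}$ with the same $\mathcal{F}_{t+1}$-measurable $g_{t+1}$. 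Taking $\mathcal{F}_t$-conditional expectations and using $\mathbb{E}[M_{t+1}\mid\mathcal{F}_t]=0$ forces $Y^1_t=Y^2_t$; subtracting then gives $(Z^1_t-Z^2_t)M_{t+1}=0$, i.e. $Z^1_t\thicksim_{M_{t+1}}Z^2_t$, which is the uniqueness statement for $Z$ and also supplies the induction hypothesis at the next stage. The indistinguishability of $Y$ follows since under Assumption \ref{general_assumption} the equalities hold for every $\omega$.

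The one genuine subtlety — and the step I expect to require the most care — is the well-posedness of the recursion itself, namely that the map $Z\mapsto f(\omega,t+1,\cdot,Z)$ is consistent with the fact that $Z_{t+1}$ is only defined up to $\thicksim_{M}$ equivalence. This is precisely what Assumption \ref{BSDE-generator} is there to handle: it guarantees $f(\omega,t+1,y,Z_{t+1})$ is independent of the representative chosen for $Z_{t+1}$, so the quantity $\eta_{t+1}$ and hence $Y_t$ are genuinely well-defined, and $Z_t$ is well-defined up to $\thicksim_{M_{t+1}}$. Once this consistency point is verified, everything else is the routine backward recursion sketched above, and no integrability issues arise because $\Omega$ is finite (all random variables are bounded, so all conditional expectations exist trivially).
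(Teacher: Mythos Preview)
Your proposal is correct and follows essentially the same backward-induction argument as the paper: at each step take $\mathcal{F}_t$-conditional expectations to determine $Y_t$, then apply the martingale representation to obtain $Z_t$, invoking Assumption \ref{BSDE-generator} so that the value of $f$ at time $t+1$ is independent of the representative of $Z_{t+1}$. The only cosmetic difference is that you cite the explicit formula of Lemma \ref{z representation} for $Z_t$ whereas the paper cites Theorem \ref{mrt_result_cohen_c}; the content of the argument is identical.
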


\begin{proof}
Taking $\mathcal{F}_{T-1}$-conditional expectation on both sides of
\begin{equation}
\Delta Y_{T-1}=-f\left(  \omega,Y_{T}\right)  +Z_{T-1}M_{T}, \label{BSDE-T-1}%
\end{equation}
we have%
\begin{equation}%
\begin{array}
[c]{rcl}%
Y_{T-1} & = & \mathbb{E}\left[  Y_{T}+f\left(  \omega,Y_{T}\right)
|\mathcal{F}_{t}\right]  .
\end{array}
\label{bsde_m_t+1_proof_0}%
\end{equation}
Adding (\ref{bsde_m_t+1_proof_0}) to (\ref{BSDE-T-1}), we have%
\[
Z_{T-1}M_{T}=Y_{T}+f\left(  \omega,Y_{T}\right)  -\mathbb{E}\left[
Y_{T}+f\left(  \omega,Y_{T}\right)  |\mathcal{F}_{t}\right]  .
\]
By Theorem \ref{mrt_result_cohen_c}, there exists a unique $Z_{T-1}$
satisfying the above equation.

Suppose that we have a unique solution $\left(  Y_{t+1},Z_{t+1}\right)  $ at
time$\ t+1$. At time $t$, by taking $\mathcal{F}_{t}$-conditional expectation,
we deduce%
\begin{equation}
Y_{t}=\mathbb{E}\left[  Y_{t+1}+f\left(  \omega,t+1,Y_{t+1},Z_{t+1}\right)
|\mathcal{F}_{t}\right]  . \label{bsde_proof_1}%
\end{equation}
By Assumption \ref{BSDE-generator}, the solution $Y_{t}$ is unique. Similarly,
we know%
\[%
\begin{array}
[c]{cl}
& Z_{t}M_{t+1}=Y_{t+1}+f\left(  \omega,t+1,Y_{t+1},Z_{t+1}\right)
-\mathbb{E}\left[  Y_{t+1}+f\left(  \omega,t+1,Y_{t+1},Z_{t+1}\right)
|\mathcal{F}_{t}\right]  .
\end{array}
\]
By Theorem \ref{mrt_result_cohen_c}, there still exists a unique $Z_{t}$.

Applying mathematical induction, there exists a unique solution $\left\{
\left(  Y_{t},Z_{t}\right)  \right\}  _{t=0}^{T-1}$.
\end{proof}

\section{Solvability of linear FBS$\Delta$Es}

In this section we study the linear FBS$\Delta$E. For simplicity, we only
consider $1$-dimensional case.

Let

(1) $A_{t},\,B_{t},\,D_{t},\,\widehat{A}_{t},\,\widehat{B}_{t},\,\widehat{D}%
_{t}$ are $\mathcal{F}_{t}$-adapted $1$-dimensional processes;

(2) $G,\,g$ are $\mathcal{F}_{T}$ measurable $1$-dimensional random variables;

(3) $\overline{A}_{t}=\left(  \overline{A}_{t}^{1},\overline{A}_{t}%
^{2},...,\overline{A}_{t}^{N}\right)  ,$\thinspace$\overline{B}_{t}=\left(
\overline{B}_{t}^{1},\overline{B}_{t}^{2},...,\overline{B}_{t}^{N}\right)
,\,\overline{D}_{t}=\left(  \overline{D}_{t}^{1},\overline{D}_{t}%
^{2},...,\overline{D}_{t}^{N}\right)  $ are $\mathcal{F}_{t}$-adapted
processes valued in $\mathbb{R}^{1\times N}$;

(4) $C_{t}=\left(  C_{t}^{1},C_{t}^{2},...,C_{t}^{N}\right)  ^{\ast
},\widehat{C}_{t}=\left(  \widehat{C}_{t}^{1},\widehat{C}_{t}^{2}%
,...,\widehat{C}_{t}^{N}\right)  ^{\ast}$ are $\mathcal{F}_{t}$-adapted
processes valued in $\mathbb{R}^{N\times1}$;

(5) $\overline{C}_{t}=%
\begin{pmatrix}
\overline{C}_{t}^{1} & \overline{C}_{t}^{2} & \cdots & \overline{C}_{t}^{N}%
\end{pmatrix}
$ are $\mathcal{F}_{t}$-adapted processes valued in $\mathbb{R}^{N\times N}$
where $\overline{C}_{t}^{i}=%
\begin{pmatrix}
\overline{C}_{t}^{1i} & \overline{C}_{t}^{2i} & \cdots & \overline{C}_{t}^{Ni}%
\end{pmatrix}
^{\ast}$.

\begin{assumption}
\label{Assu-linear-fbsde} $\mathbf{1}_{N}C_{t}=\mathbf{1}_{N}\widehat{C}%
_{t}=0$, $\widehat{C}_{T}=0$ and $\mathbf{1}_{N}\overline{C}_{t}^{i}=0$ for
$i\in\left\{  1,2,...,N\right\}  $.
\end{assumption}

The above Assumption guarantees that for any $Z_{t},$ $\widetilde{Z}_{t}\in
L\left(  \mathcal{F}_{t};\mathbb{R}^{1\times N}\right)  $ and $t\in\left\{
0,1,2,...,T-1\right\}  $, if $Z_{t}\thicksim_{M_{t+1}}\widetilde{Z}_{t}$, then
$Z_{t}C_{t}=\widetilde{Z}_{t}C_{t},\,Z_{t}\,\widehat{C}_{t}=\widetilde{Z}%
_{t}\,\widehat{C}_{t}$ and$\,Z_{t}\overline{C}_{t}=\widetilde{Z}_{t}%
\overline{C}_{t}.$

Consider the following linear FBS$\Delta$E driven by the martingale difference
process $M$:%
\begin{equation}
\left\{
\begin{array}
[c]{rcl}%
\Delta X_{t} & = & A_{t}X_{t}+B_{t}Y_{t}+Z_{t}C_{t}+D_{t}+\left(
X_{t}\overline{A}_{t}+Y_{t}\overline{B}_{t}+Z_{t}\overline{C}_{t}+\overline
{D}_{t}\right)  M_{t+1},\\
\Delta Y_{t} & = & \widehat{A}_{t+1}X_{t+1}+\widehat{B}_{t+1}Y_{t+1}%
+Z_{t+1}\widehat{C}_{t+1}+\widehat{D}_{t+1}+Z_{t}M_{t+1},\\
X_{0} & = & x_{0},\\
Y_{T} & = & GX_{T}+g.
\end{array}
\right.  \label{fbsde_linear_1d}%
\end{equation}
The solution is a triple $\mathcal{F}_{t}$-adapted processes $\left(
X,Y,Z\right)  $ valued in $\mathbb{R}\times\mathbb{R}\times\mathbb{R}^{1\times
N}$ which satisfies the above equation. For simplicity of notation, we denote
$\mathbf{1}_{N}$ by $\mathbf{1}$ and denote $I_{N\times N}$ by $I$.

Set%
\begin{align*}
\mathcal{A}_{t}  &  =\mathbf{1}\left(  1+A_{t}\right)  +\left(  I-\mathbf{1}%
\mathbb{E}\left[  W_{t+1}^{\ast}|\mathcal{F}_{t}\right]  \right)  \overline
{A}_{t}^{\ast},\\
\mathcal{B}_{t}  &  =\mathbf{1}B_{t}+\left(  I-\mathbf{1}\mathbb{E}\left[
W_{t+1}^{\ast}|\mathcal{F}_{t}\right]  \right)  \overline{B}_{t}^{\ast},\\
\mathcal{C}_{t}  &  =\mathbf{1}C_{t}^{\ast}+\left(  I-\mathbf{1}%
\mathbb{E}\left[  W_{t+1}^{\ast}|\mathcal{F}_{t}\right]  \right)  \overline
{C}_{t}^{\ast},
\end{align*}%
\[
\Lambda_{t}\left(  \phi\right)  =\left(
\begin{array}
[c]{c}%
\Lambda_{t}^{1}\left(  \phi\right) \\
\vdots\\
\Lambda_{t}^{d}\left(  \phi\right)
\end{array}
\right)  ,\text{ }\widetilde{\Lambda}_{t}\left(  \phi\right)  =\left(
\begin{array}
[c]{ccc}%
\Lambda_{t}^{1}\left(  \phi\right)  & \cdots & 0\\
\vdots & \ddots & \vdots\\
0 & \cdots & \Lambda_{t}^{d}\left(  \phi\right)
\end{array}
\right)  ,
\]
where $\phi\in L\left(  \mathcal{F}_{t+1};\mathbb{R}\right)  $, $\Lambda
_{t}^{i}\left(  \cdot\right)  $ is defined in (\ref{c2_eq_m_lambda}). We
define%
\[
\Gamma_{t}\left(  P_{t+1}\right)  =I-\left(  \mathcal{B}_{t}\mathbb{E}\left[
W_{t+1}^{\ast}|\mathcal{F}_{t}\right]  +\mathcal{C}_{t}\right)
\widetilde{\Lambda}_{t}\left(  P_{t+1}\right)  ,
\]
where $P_{t}$ is recursively defined by%
\[
\left\{
\begin{array}
[c]{ccl}%
P_{t} & = & -\widehat{A}_{t}+\theta\left(  \widehat{B}_{t},\widehat{C}%
_{t}\right)  \widetilde{\Lambda}_{t}\left(  P_{t+1}\right)  \left[  \Gamma
_{t}\left(  P_{t+1}\right)  \right]  ^{-1}\mathcal{A}_{t},\\
P_{T} & = & -\widehat{A}_{T}+\left(  1-\widehat{B}_{T}\right)  G
\end{array}
\right.
\]
and $\theta\left(  \widehat{B}_{t},\widehat{C}_{t}\right)  =\left(
1-\widehat{B}_{t}\right)  \mathbb{E}\left[  W_{t+1}^{\ast}|\mathcal{F}%
_{t}\right]  -\widehat{C}_{t}^{\ast}$.

\begin{theorem}
\label{solution theorem for linear fbsde_2}Suppose that Assumption
(\ref{Assu-linear-fbsde}) holds. Then FBS$\Delta$E\thinspace
(\ref{fbsde_linear_1d}) has a unique solution if and only if for any
$t\in\left\{  0,1,2,...,T-1\right\}  $ the $N$-dimensional matrix $\Gamma
_{t}\left(  P_{t+1}\right)  $ is invertible. For this case, the solution to
FBS$\Delta$E\thinspace(\ref{fbsde_linear_1d}) is%
\[
\left\{
\begin{array}
[c]{rcl}%
\Lambda_{t-1}\left(  X_{t}\right)  & = & \left[  \Gamma_{t-1}\left(
P_{t}\right)  \right]  ^{-1}\mathcal{A}_{t-1}X_{t-1}\\
&  & +\left[  \Gamma_{t-1}\left(  P_{t}\right)  \right]  ^{-1}\left[  \left(
\mathcal{B}_{t-1}\mathbb{E}\left[  W_{t}^{\ast}|\mathcal{F}_{t-1}\right]
+\mathcal{C}_{t-1}\right)  \widetilde{\Lambda}_{t-1}\left(  p_{t}\right)
\right]  \mathbf{1},\\
Y_{t} & = & \mathbb{E}\left[  P_{t+1}X_{t+1}+p_{t+1}|\mathcal{F}_{t}\right]
,\\
Z_{t} & = & \Lambda_{t}^{\ast}\left(  P_{t+1}X_{t+1}+p_{t+1}\right)  ,\\
X_{0} & = & x_{0},
\end{array}
\right.
\]
where%
\[
\left\{
\begin{array}
[c]{ccl}%
p_{t} & = & \theta\left(  \widehat{B}_{t},\widehat{C}_{t}\right)
\widetilde{\Lambda}_{t}\left(  P_{t+1}\right)  \left[  \Gamma_{t}\left(
P_{t+1}\right)  \right]  ^{-1}\left[  \left(  \mathcal{B}_{t}\mathbb{E}\left[
W_{t+1}^{\ast}|\mathcal{F}_{t}\right]  +\mathcal{C}_{t}\right)
\widetilde{\Lambda}_{t}\left(  P_{t+1}\right)  \right]  \mathbf{1}\\
&  & +\theta\left(  \widehat{B}_{t},\widehat{C}_{t}\right)  \widetilde{\Lambda
}_{t}\left(  P_{t+1}\right)  \left[  \Gamma_{t}\left(  P_{t+1}\right)
\right]  ^{-1}\mathcal{D}_{t}+\theta\left(  \widehat{B}_{t},\widehat{C}%
_{t}\right)  \Lambda_{t}\left(  p_{t+1}\right)  -\widehat{D}_{t},\\
p_{T} & = & \left(  1-\widehat{B}_{T}\right)  g-\widehat{D}_{T}.
\end{array}
\right.
\]

\end{theorem}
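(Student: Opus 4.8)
The plan is to decouple the forward and backward parts by a backward induction that converts the coupling into a sequence of $N$-dimensional linear algebraic equations, one per time step. The guess, to be validated by the induction, is that \emph{every} solution $(X,Y,Z)$ of (\ref{fbsde_linear_1d}) satisfies $Y_t=\mathbb{E}[P_{t+1}X_{t+1}+p_{t+1}\,|\,\mathcal{F}_t]$ and $Z_t\thicksim_{M_{t+1}}\Lambda_t^{\ast}(P_{t+1}X_{t+1}+p_{t+1})$ for the $P_t,p_t$ in the statement, and that $\Lambda_t(X_{t+1})$ solves a linear system with coefficient matrix $\Gamma_t(P_{t+1})$. First I would treat the base step $t=T-1$: conditioning the backward equation at $T-1$ on $\mathcal{F}_{T-1}$ and substituting $Y_T=GX_T+g$ together with $\widehat{C}_T=0$ (Assumption \ref{Assu-linear-fbsde}) gives $Y_{T-1}=(1-\widehat{B}_T)Y_T-\widehat{A}_TX_T-\widehat{D}_T-Z_{T-1}M_T=P_TX_T+p_T-Z_{T-1}M_T$; conditioning yields the formula for $Y_{T-1}$, and subtracting it gives $Z_{T-1}M_T=(P_TX_T+p_T)-\mathbb{E}[P_TX_T+p_T\,|\,\mathcal{F}_{T-1}]$, so Theorem \ref{mrt_result_cohen_c} and Lemma \ref{z representation} deliver $Z_{T-1}=\Lambda_{T-1}^{\ast}(P_TX_T+p_T)$, unique up to $\thicksim_{M_T}$.

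For the inductive step, assuming the relations at time $t$, I would perform two substitutions. First, I insert $Y_t=\mathbb{E}[P_{t+1}X_{t+1}+p_{t+1}|\mathcal{F}_t]$ and $Z_t=\Lambda_t^{\ast}(P_{t+1}X_{t+1}+p_{t+1})$ into the backward equation at $t-1$; writing $\Lambda_t^i(\zeta)$ for the value of an $\mathcal{F}_{t+1}$-measurable $\zeta$ on $\{W_{t+1}=e_i\}$ and using $\Lambda_t(P_{t+1}X_{t+1})=\widetilde{\Lambda}_t(P_{t+1})\Lambda_t(X_{t+1})$, this collapses to $(1-\widehat{B}_t)Y_t-Z_t\widehat{C}_t=\theta(\widehat{B}_t,\widehat{C}_t)(\widetilde{\Lambda}_t(P_{t+1})\Lambda_t(X_{t+1})+\Lambda_t(p_{t+1}))$, so that $Y_{t-1}$ is expressed through $\Lambda_t(X_{t+1})$ and $X_t$. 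Second, I substitute the time-$t$ expressions for $Y_t,Z_t$ into the forward equation at $t$ and apply $\Lambda_t(\cdot)$ to both sides. The three elementary identities $\Lambda_t(U)=U\mathbf{1}$ for $\mathcal{F}_t$-measurable $U$, $\Lambda_t(VM_{t+1})=(I-\mathbf{1}\mathbb{E}[W_{t+1}^{\ast}|\mathcal{F}_t])V^{\ast}$ for $\mathcal{F}_t$-measurable row vectors $V$, and $\Lambda_t(P_{t+1}X_{t+1})=\widetilde{\Lambda}_t(P_{t+1})\Lambda_t(X_{t+1})$, are exactly what turn the right side into $\mathcal{A}_tX_t+(\mathcal{B}_t\mathbb{E}[W_{t+1}^{\ast}|\mathcal{F}_t]+\mathcal{C}_t)(\widetilde{\Lambda}_t(P_{t+1})\Lambda_t(X_{t+1})+\Lambda_t(p_{t+1}))+\mathcal{D}_t$; rearranging produces the $N$-dimensional linear equation $\Gamma_t(P_{t+1})\Lambda_t(X_{t+1})=\mathcal{A}_tX_t+(\cdots)$ for the unknown vector $\Lambda_t(X_{t+1})$. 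When $\Gamma_t(P_{t+1})$ is invertible this pins down $\Lambda_t(X_{t+1})$, hence $X_{t+1}$ itself since $\Lambda_t$ is injective on $L(\mathcal{F}_{t+1};\mathbb{R})$ by Assumption \ref{general_assumption}; feeding the result back into the two relations identifies $P_t,p_t$ with the stated recursion and reproduces the claimed expressions for $Y_t$ and $\Lambda_{t-1}(X_t)$.

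Granting the induction, \emph{sufficiency} is immediate: if every $\Gamma_t(P_{t+1})$ is invertible then all $P_t,p_t$ are well defined, and one constructs a solution forward from $X_0=x_0$ by solving the algebraic equations step by step (injectivity of $\Lambda_t$ turning the vector $\Lambda_t(X_{t+1})$ back into $X_{t+1}$) and setting $Y_t,Z_t$ by the formulas. One then verifies (\ref{fbsde_linear_1d}) directly: the forward equations because applying the injective $\Lambda_t$ to both sides is an identity, the backward equations and $Y_T=GX_T+g$ by the way $P_t,p_t$ were designed together with the martingale representation, and $X_0=x_0$ by construction; uniqueness holds because every solution obeys the time-$t$ relations and is therefore pinned down by $x_0$.

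For \emph{necessity} I would argue the contrapositive: if some $\Gamma_t(P_{t+1})$ is not invertible, let $\bar t$ be the largest such index, so that $P_{\bar t+1},p_{\bar t+1}$ are still well defined and the time-$\bar t$ relations hold for any solution. Then $\Lambda_{\bar t}(X_{\bar t+1})$ is forced to solve the singular system $\Gamma_{\bar t}(P_{\bar t+1})\Lambda_{\bar t}(X_{\bar t+1})=\mathcal{A}_{\bar t}X_{\bar t}+(\cdots)$: on a positive-probability set where the right side leaves the range of $\Gamma_{\bar t}(P_{\bar t+1})$ there is no solution, and otherwise the solution set is a nontrivial coset of $\ker\Gamma_{\bar t}(P_{\bar t+1})$, which I would promote to two distinct global solutions. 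I expect this last point to be the main obstacle: altering $\Lambda_{\bar t}(X_{\bar t+1})$ by a nonzero kernel element changes $Y_{\bar t},Z_{\bar t}$ and thus forces compensating changes at earlier times, so one has to check — most cleanly by passing to the homogeneous version of (\ref{fbsde_linear_1d}), for which $p_t\equiv0$ — that such a kernel element genuinely extends to a nonzero solution of the whole system rather than being obstructed by the forward equations on $\{0,\dots,\bar t\}$. That propagation and consistency argument, and not any of the algebra above, is where the real work lies.
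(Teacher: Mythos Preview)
Your approach is essentially the paper's own: both decouple via backward induction, introduce $\lambda_t=P_tX_t+p_t$, derive the same $N$-dimensional linear algebraic equation for $\Lambda_t(X_{t+1})$ with coefficient matrix $\Gamma_t(P_{t+1})$, and identify invertibility of that matrix as the solvability criterion. The paper carries out the base step $t=T-1$ explicitly (exactly along the lines of your first paragraph), writes down the resulting affine relations $Y_{T-1}=G_{T-1}X_{T-1}+g_{T-1}$, $Z_{T-1}=H_{T-1}X_{T-1}+h_{T-1}$, and then simply says ``the result follows from the backward induction.'' Your write-up spells out the inductive mechanism in somewhat more detail but follows the same route.

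You are right that the necessity direction is where the real care is needed, and here the paper is in fact no more explicit than you are. The paper's argument at $T-1$ is that, \emph{given} $X_{T-1}$, the one-step problem is equivalent to the linear system $\Gamma_{T-1}(P_T)\Lambda_{T-1}(X_T)=\text{RHS}(X_{T-1})$, so unique solvability of the one-step problem for that $X_{T-1}$ holds iff $\Gamma_{T-1}(P_T)$ is invertible; it then invokes backward induction without spelling out the propagation. Your concern---that a nonzero kernel element at step $\bar t$ must be shown to extend to a genuine second global solution, since the resulting change in $Y_{\bar t},Z_{\bar t}$ feeds into the backward equation at time $\bar t-1$ and hence into the forward equation there---is a legitimate issue the paper does not address in writing. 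So on this point you have not missed anything the paper actually provides; you have simply named a gap that the paper leaves implicit under the phrase ``by backward induction.''
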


\begin{proof}
Set%
\[
\lambda_{T}=-\widehat{A}_{T}X_{T}+\left(  1-\widehat{B}_{T}\right)
Y_{T}-\widehat{D}_{T}.
\]
Since $Y_{T}=GX_{T}+g$, we have%
\[
\lambda_{T}=\left[  -\widehat{A}_{T}+\left(  1-\widehat{B}_{T}\right)
G\right]  X_{T}+\left(  1-\widehat{B}_{T}\right)  g-\widehat{D}_{T}=P_{T}%
X_{T}+p_{T}.
\]
By the results in section 2, we deduce%
\begin{equation}
\left\{
\begin{array}
[c]{ccl}%
Y_{T-1} & = & \mathbb{E}\left[  \lambda_{T}|\mathcal{F}_{T-1}\right] \\
& = & \mathbb{E}\left[  P_{T}X_{T}|\mathcal{F}_{T-1}\right]  +\mathbb{E}%
\left[  p_{T}|\mathcal{F}_{T-1}\right]  ,\\
Z_{T-1} & = & \Lambda_{T-1}^{\ast}\left(  \lambda_{T}\right) \\
& = & \left(  \Lambda_{T-1}^{1}\left(  P_{T}\right)  \Lambda_{T-1}^{1}\left(
X_{T}\right)  ,...,\Lambda_{T-1}^{N}\left(  P_{T}\right)  \Lambda_{T-1}%
^{N}\left(  X_{T}\right)  \right) \\
&  & +\left(  \Lambda_{T-1}^{1}\left(  p_{T}\right)  ,...,\Lambda_{T-1}%
^{N}\left(  p_{T}\right)  \right)  .
\end{array}
\right.  \label{z_representation_lfbsde2}%
\end{equation}
It is clear that $Y_{T-1}$,\ $Z_{T-1}$ can be represented by $X_{T}$, $P_{T}$
and $p_{T}$. Substituting (\ref{z_representation_lfbsde2}) into the forward
equation, we obtain $N$ equations for $i=1,...,N$ where the $i$-th equation is%
\[%
\begin{array}
[c]{cl}
& \Lambda_{T-1}^{i}\left(  X_{T}\right)  -X_{T-1}\\
= & A_{T-1}X_{T-1}+B_{T-1}\sum\limits_{j=1}^{N}\left[  \Lambda_{T-1}%
^{j}\left(  P_{T}\right)  \Lambda_{T-1}^{j}\left(  X_{T}\right)
+\Lambda_{T-1}^{j}\left(  p_{T}\right)  \right]  e_{j}^{\ast}\mathbb{E}\left[
W_{T}|\mathcal{F}_{T-1}\right] \\
& +\sum\limits_{j=1}^{N}\left[  \Lambda_{T-1}^{j}\left(  P_{T}\right)
\Lambda_{T-1}^{j}\left(  X_{T}\right)  +\Lambda_{T-1}^{j}\left(  p_{T}\right)
\right]  C_{T-1}^{j}+X_{T-1}\overline{A}_{T-1}\left(  e_{i}-\mathbb{E}\left[
W_{T}|\mathcal{F}_{T-1}\right]  \right) \\
& +\sum\limits_{j=1}^{N}\left[  \Lambda_{T-1}^{j}\left(  P_{T}\right)
\Lambda_{T-1}^{j}\left(  X_{T}\right)  +\Lambda_{T-1}^{j}\left(  p_{T}\right)
\right]  e_{j}^{\ast}\mathbb{E}\left[  W_{T}|\mathcal{F}_{T-1}\right]
\overline{B}_{T-1}\left(  e_{i}-\mathbb{E}\left[  W_{T}|\mathcal{F}%
_{T-1}\right]  \right) \\
& +\sum\limits_{j=1}^{N}\left[  \Lambda_{T-1}^{j}\left(  P_{T}\right)
\Lambda_{T-1}^{j}\left(  X_{T}\right)  +\Lambda_{T-1}^{j}\left(  p_{T}\right)
\right]  e_{j}^{\ast}\overline{C}_{T-1}\left(  e_{i}-\mathbb{E}\left[
W_{T}|\mathcal{F}_{T-1}\right]  \right) \\
& +D_{T-1}+\overline{D}_{T-1}\left(  e_{i}-\mathbb{E}\left[  W_{T}%
|\mathcal{F}_{T-1}\right]  \right)
\end{array}
\]
or equivalently, we obtain the following $N$-dimensional linear algebraic
equation of $\Lambda_{T-1}^{1}\left(  X_{T}\right)  ,...,\Lambda_{T-1}%
^{N}\left(  X_{T}\right)  $:%
\begin{equation}%
\begin{array}
[c]{cl}
& \left[  I-\left(  \mathcal{B}_{T-1}\mathbb{E}\left[  W_{T}^{\ast
}|\mathcal{F}_{T-1}\right]  +\mathcal{C}_{T-1}\right)  \widetilde{\Lambda
}_{T-1}\left(  P_{T}\right)  \right]  \Lambda_{T-1}\left(  X_{T}\right) \\
= & \mathcal{A}_{T-1}X_{T-1}+\left[  \left(  \mathcal{B}_{T-1}\mathbb{E}%
\left[  W_{T}^{\ast}|\mathcal{F}_{T-1}\right]  +\mathcal{C}_{T-1}\right)
\widetilde{\Lambda}_{T-1}\left(  P_{T}\right)  \right]  \mathbf{1+}%
\mathcal{D}_{t}.
\end{array}
\label{algebraic_equation_2}%
\end{equation}
It is easy to check that if $\left(  X_{T-1},Y_{T-1},Z_{T-1}\right)  $ is the
solution of FBS$\Delta$E at time $T-1$, then $\Lambda_{T-1}\left(
X_{T}\right)  $ is the solution of linear algebraic equation
(\ref{algebraic_equation_2}). And on the other hand, if $\Lambda_{T-1}\left(
X_{T}\right)  $ is the solution of equation (\ref{algebraic_equation_2}), then
$\left(  X_{T-1},Y_{T-1},Z_{T-1}\right)  $ is the solution of FBS$\Delta$E at
time $T-1$ where $Y_{T}$ and $Z_{T-1}$ are defined by equation
(\ref{z_representation_lfbsde2}). Hence FBS$\Delta$E has a unique solution at
time $T-1$ if and only if the linear algebraic equation
(\ref{algebraic_equation_2}) has a unique solution. The latter statement is
equivalent to say that%
\[
\Gamma_{T-1}\left(  P_{T}\right)  =I-\left(  \mathcal{B}_{T-1}\mathbb{E}%
\left[  W_{T}^{\ast}|\mathcal{F}_{T-1}\right]  +\mathcal{C}_{T-1}\right)
\widetilde{\Lambda}_{T-1}\left(  P_{T}\right)
\]
is a invertible matrix. For this case, $\Lambda_{T-1}\left(  X_{T}\right)  $
can be written as a function of $X_{T-1}$:
\[%
\begin{array}
[c]{cl}%
\Lambda_{T-1}\left(  X_{T}\right)  = & \left[  \Gamma_{T-1}\left(
P_{T}\right)  \right]  ^{-1}\mathcal{A}_{T-1}X_{T-1}\\
& +\left[  \Gamma_{T-1}\left(  P_{T}\right)  \right]  ^{-1}\left[  \left(
\mathcal{B}_{T-1}\mathbb{E}\left[  W_{T}^{\ast}|\mathcal{F}_{T-1}\right]
+\mathcal{C}_{T-1}\right)  \widetilde{\Lambda}_{T-1}\left(  P_{T}\right)
\right]  \mathbf{1}\\
& \mathbf{+}\left[  \Gamma_{T-1}\left(  P_{T}\right)  \right]  ^{-1}%
\mathcal{D}_{t}.
\end{array}
\]
Combining (\ref{z_representation_lfbsde2}), we obtain%
\[
\left\{
\begin{array}
[c]{ccc}%
Y_{T-1} & = & G_{T-1}X_{T-1}+g_{T-1},\\
Z_{T-1} & = & H_{T-1}X_{T-1}+h_{T-1},
\end{array}
\right.
\]
where%
\[
\left\{
\begin{array}
[c]{ccl}%
G_{T-1} & = & \left[  \mathbb{E}\left[  W_{T}^{\ast}|\mathcal{F}_{T-1}\right]
\widetilde{\Lambda}_{T-1}\left(  P_{T}\right)  \right]  \left[  \Gamma
_{T-1}\left(  P_{T}\right)  \right]  ^{-1}\mathcal{A}_{T-1},\\
g_{T-1} & = & \left[  \mathbb{E}\left[  W_{T}^{\ast}|\mathcal{F}_{T-1}\right]
\widetilde{\Lambda}_{T-1}\left(  P_{T}\right)  \right]  \left[  \Gamma
_{T-1}\left(  P_{T}\right)  \right]  ^{-1}\left[  \left(  \mathcal{B}%
_{T-1}\mathbb{E}\left[  W_{T}^{\ast}|\mathcal{F}_{T-1}\right]  +\mathcal{C}%
_{T-1}\right)  \widetilde{\Lambda}_{T-1}\left(  p_{T}\right)  \right]
\mathbf{1}\\
&  & +\left[  \mathbb{E}\left[  W_{T}^{\ast}|\mathcal{F}_{T-1}\right]
\widetilde{\Lambda}_{T-1}\left(  P_{T}\right)  \right]  \left[  \Gamma
_{T-1}\left(  P_{T}\right)  \right]  ^{-1}\mathcal{D}_{t}+\mathbb{E}\left[
W_{T}^{\ast}|\mathcal{F}_{T-1}\right]  \Lambda_{T-1}\left(  p_{T}\right)  ,\\
H_{T-1} & = & \mathcal{A}_{T-1}^{\ast}\left[  \Gamma_{T-1}^{\ast}\left(
P_{T}\right)  \right]  ^{-1}\widetilde{\Lambda}_{T-1}\left(  P_{T}\right)  ,\\
h_{T-1} & = & \left[  \mathbf{1}^{\ast}\left[  \left(  \mathcal{B}%
_{T-1}\mathbb{E}\left[  W_{T}^{\ast}|\mathcal{F}_{T-1}\right]  +\mathcal{C}%
_{T-1}\right)  \widetilde{\Lambda}_{T-1}\left(  p_{T}\right)  \right]  ^{\ast
}\left[  \Gamma_{T-1}^{\ast}\left(  P_{T}\right)  \right]  ^{-1}\right]
\widetilde{\Lambda}_{T-1}\left(  P_{T}\right) \\
&  & \mathbf{+}\mathcal{D}_{t}^{\ast}\left[  \Gamma_{T-1}^{\ast}\left(
P_{T}\right)  \right]  ^{-1}\widetilde{\Lambda}_{T-1}\left(  P_{T}\right)
+\Lambda_{T-1}^{\ast}\left(  p_{T}\right)  .
\end{array}
\right.
\]
Set%
\[%
\begin{array}
[c]{cl}
& \lambda_{T-1}\\
= & -\widehat{A}_{T-1}X_{T-1}+\left(  1-\widehat{B}_{T-1}\right)
Y_{T-1}-Z_{T-1}\widehat{C}_{T-1}-\widehat{D}_{T-1}\\
= & \left[  -\widehat{A}_{T-1}+\left(  1-\widehat{B}_{T-1}\right)
G_{T-1}-H_{T-1}\widehat{C}_{T-1}\right]  X_{T-1}\\
& +\left(  1-\widehat{B}_{T-1}\right)  g_{T-1}-h_{T-1}\widehat{C}%
_{T-1}-\widehat{D}_{T-1}%
\end{array}
\]
and%
\[
\left\{
\begin{array}
[c]{l}%
P_{T-1}=-\widehat{A}_{T-1}+\left(  1-\widehat{B}_{T-1}\right)  G_{T-1}%
-H_{T-1}\widehat{C}_{T-1},\\
p_{T-1}=\left(  1-\widehat{B}_{T-1}\right)  g_{T-1}-h_{T-1}\widehat{C}%
_{T-1}-\widehat{D}_{T-1}.
\end{array}
\right.
\]
Then we have $\lambda_{T-1}=P_{T-1}X_{T-1}+p_{T-1}$. Thus, the result follows
from the backward induction.
\end{proof}

The following corollary can be deduced directly by Theorem
\ref{solution theorem for linear fbsde_2} and will be used in the next section.

\begin{corollary}
\label{solution corollary for linear fbsde_2}For any $D_{t}\in\mathcal{M}%
\left(  0,T-1;\mathbb{R}\right)  $, $\overline{D}_{t}\in\mathcal{M}\left(
0,T-1;\mathbb{R}^{1\times N}\right)  $, $\widehat{D}_{t}\in\mathcal{M}\left(
1,T;\mathbb{R}\right)  $, $g\in L\left(  \mathcal{F}_{T};\mathbb{R}\right)  $,
the linear FBS$\Delta$E%
\begin{equation}
\left\{
\begin{array}
[c]{ccl}%
\Delta X_{t} & = & -Y_{t}+D_{t}+\left(  -Z_{t}\widehat{I}+\overline{D}%
_{t}\right)  M_{t+1},\\
\Delta Y_{t} & = & -X_{t+1}+\widehat{D}_{t+1}+Z_{t}M_{t+1},\\
X_{0} & = & x_{0},\\
Y_{T} & = & X_{T}+g,
\end{array}
\right.  \label{fbsde_linear_1d_special}%
\end{equation}
has a unique solution.
\end{corollary}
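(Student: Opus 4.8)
The plan is to recognize (\ref{fbsde_linear_1d_special}) as an instance of the general linear equation (\ref{fbsde_linear_1d}) and then to apply Theorem \ref{solution theorem for linear fbsde_2}. Matching the two systems term by term, one takes $A_{t}=0$, $B_{t}=-1$, $C_{t}=0$, $\overline{A}_{t}=0$, $\overline{B}_{t}=0$, $\overline{C}_{t}=-\widehat{I}$, $\widehat{A}_{t}=-1$, $\widehat{B}_{t}=0$, $\widehat{C}_{t}=0$, $G=1$, while $D_{t}$, $\overline{D}_{t}$, $\widehat{D}_{t}$, $g$ coincide with the prescribed data. For this choice Assumption \ref{Assu-linear-fbsde} holds trivially: $C_{t}=0$ and $\widehat{C}_{t}=0$ give $\mathbf{1}_{N}C_{t}=\mathbf{1}_{N}\widehat{C}_{t}=0$ and $\widehat{C}_{T}=0$, and $\mathbf{1}_{N}\overline{C}_{t}^{i}=0$ for each $i$ because $\overline{C}_{t}=-\widehat{I}$ and every column of $\widehat{I}$ sums to zero. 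Thus the hypothesis of Theorem \ref{solution theorem for linear fbsde_2} is met, and all that remains is to check that $\Gamma_{t}(P_{t+1})$ is invertible for every $t\in\{0,1,\dots,T-1\}$; granting this, the theorem yields existence, uniqueness, and the explicit recursive representation of the solution.

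The second step is to evaluate the auxiliary objects of Theorem \ref{solution theorem for linear fbsde_2} at these coefficients. A direct computation gives $\mathcal{A}_{t}=\mathbf{1}_{N}$, $\mathcal{B}_{t}=-\mathbf{1}_{N}$, $\theta(\widehat{B}_{t},\widehat{C}_{t})=\mathbb{E}[W_{t+1}^{\ast}|\mathcal{F}_{t}]$, and $P_{T}=-\widehat{A}_{T}+(1-\widehat{B}_{T})G=2$. The decisive point concerns the block $\mathcal{B}_{t}\mathbb{E}[W_{t+1}^{\ast}|\mathcal{F}_{t}]+\mathcal{C}_{t}$ entering $\Gamma_{t}$: starting from $\mathcal{C}_{t}=-(I_{N\times N}-\mathbf{1}_{N}\mathbb{E}[W_{t+1}^{\ast}|\mathcal{F}_{t}])\widehat{I}^{\ast}$ and using $\mathbb{E}[W_{t+1}^{\ast}|\mathcal{F}_{t}]\mathbf{1}_{N}=1$ together with the defining structure of $\widehat{I}$, one finds that this block equals $-I_{N\times N}$, so that
\[
\Gamma_{t}(P_{t+1})=I_{N\times N}+\widetilde{\Lambda}_{t}(P_{t+1}).
\]
I expect this algebraic collapse of the coefficient block to $-I_{N\times N}$ to be the only genuinely nontrivial step; everything afterwards is routine.

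The final step is a backward induction on $t$ establishing simultaneously that $P_{t}>0$ and that $\Gamma_{t}(P_{t+1})$ is invertible. The base case $P_{T}=2>0$ is clear. For the inductive step, suppose $P_{t+1}>0$. Since Assumption \ref{general_assumption} forces $P_{t}^{i}>0$, every diagonal entry $\Lambda_{t}^{i}(P_{t+1})=\mathbb{E}[P_{t+1}1_{\{W_{t+1}=e_{i}\}}|\mathcal{F}_{t}]/P_{t}^{i}$ is strictly positive, so $\widetilde{\Lambda}_{t}(P_{t+1})$ is a strictly positive diagonal matrix and $\Gamma_{t}(P_{t+1})=I_{N\times N}+\widetilde{\Lambda}_{t}(P_{t+1})$ is positive definite, hence invertible. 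The recursion $P_{t}=-\widehat{A}_{t}+\theta(\widehat{B}_{t},\widehat{C}_{t})\widetilde{\Lambda}_{t}(P_{t+1})[\Gamma_{t}(P_{t+1})]^{-1}\mathcal{A}_{t}$ of Theorem \ref{solution theorem for linear fbsde_2} then produces $P_{t}$, and since $\widetilde{\Lambda}_{t}(P_{t+1})[\Gamma_{t}(P_{t+1})]^{-1}$ is diagonal with entries $\Lambda_{t}^{i}(P_{t+1})/(1+\Lambda_{t}^{i}(P_{t+1}))\in(0,1)$,
\[
P_{t}=1+\sum_{i=1}^{N}P_{t}^{i}\,\frac{\Lambda_{t}^{i}(P_{t+1})}{1+\Lambda_{t}^{i}(P_{t+1})}\in(1,2),
\]
so $P_{t}>0$ and the induction closes. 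Therefore $\Gamma_{t}(P_{t+1})$ is invertible for every $t$, and Theorem \ref{solution theorem for linear fbsde_2} shows that the FBS$\Delta$E (\ref{fbsde_linear_1d_special}) has a unique solution, as claimed.
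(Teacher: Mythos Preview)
Your proposal is correct and follows essentially the same route as the paper: identify the coefficients, verify Assumption \ref{Assu-linear-fbsde}, compute $\mathcal{A}_{t}$, $\mathcal{B}_{t}$, $\mathcal{C}_{t}$ to obtain $\Gamma_{t}(P_{t+1})=I+\widetilde{\Lambda}_{t}(P_{t+1})$, and then run a backward induction showing $P_{t}>1$ so that $\Gamma_{t}(P_{t+1})$ is invertible. The paper additionally observes that $P_{t}$ is in fact deterministic (since $P_{T}=2$ is constant and then $\Lambda_{t}^{i}(P_{t+1})=P_{t+1}$ for all $i$), which collapses your recursion to $P_{t}=1+P_{t+1}/(1+P_{t+1})$; your more general formulation is still valid but slightly less sharp.
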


\begin{proof}
In this case, the coefficients of the FBS$\Delta$E are%
\begin{align*}
A_{t}  &  =\overline{A}_{t}=\overline{B}_{t}=\widehat{B}_{t}=C_{t}%
=\widehat{C}_{t}=0,\\
\widehat{A}_{t}  &  =B_{t}=-1,\overline{C}_{t}=-\widehat{I},\\
G  &  =1.
\end{align*}
So
\[
\mathcal{A}_{t}=\mathbf{1},\mathcal{B}_{t}=-\mathbf{1},\mathcal{C}%
_{t}=\mathbf{1}\mathbb{E}\left[  W_{t+1}^{\ast}|\mathcal{F}_{t}\right]
-I,\Gamma_{t}\left(  P_{t+1}\right)  =I+\widetilde{\Lambda}_{t}\left(
P_{t+1}\right)
\]
where $P_{t}$ is defined by%
\[%
\begin{array}
[c]{ccl}%
P_{t} & = & 1+\mathbb{E}\left[  P_{t+1}\left(  1+P_{t+1}\right)
^{-1}|\mathcal{F}_{t}\right]  ,\\
P_{T} & = & -\widehat{A}_{T}+\left(  1-\widehat{B}_{T}\right)  G=2.
\end{array}
\]
It is clear that for any $t\in\left\{  1,...,T\right\}  $, $P_{t}$ is
deterministic and $P_{t}>1$. Then $\Gamma_{t-1}\left(  P_{t}\right)  $ is
invertible. By Theorem \ref{solution theorem for linear fbsde_2},
(\ref{fbsde_linear_1d_special}) has a unique solution.
\end{proof}

\begin{remark}
Notice that $Z_{t}\thicksim_{M_{t+1}}Z_{t}\widehat{I}$. So the term
$Z_{t}\widehat{I}$ in FBS$\Delta$E (\ref{fbsde_linear_1d_special}) can be
replaced by $Z_{t}$.
\end{remark}

Now we give two examples to illustrate Theorem
\ref{solution theorem for linear fbsde_2}.

\begin{example}
If FBS$\Delta$E \thinspace(\ref{fbsde_linear_1d}) is only partially coupled,
namely, the forward equation is independent of $Y$ and $Z$, then it is obvious
that there exists a unique solution $X$ to the forward equation and by Theorem
\ref{bsde_result_l} there also exists a unique solution $\left(  Y,Z\right)  $
to the backward equation. We can also obtain the same result by Theorem
\ref{solution theorem for linear fbsde_2} since for this partially coupled
case, we have $B_{t}\equiv0,C_{t}\equiv0,\overline{B}_{t}\equiv0,\overline
{C}_{t}\equiv0$ which leads to $\mathcal{B}_{t}\equiv0$, $\mathcal{C}%
_{t}\equiv0$ and $\Gamma_{t}\left(  P_{t+1}\right)  \equiv I$.
\end{example}

\begin{example}
We suppose that the drift term and diffusion term of the forward equation as
well as the generator of the backward equation are all independent of $Z$.
Then we have $C_{t}\equiv0,\,\overline{C}_{t}\equiv0,\,\widehat{C}_{t}\equiv0$
and%
\[
\Gamma_{t}\left(  P_{t+1}\right)  =I-\left(  \mathcal{B}_{t}\mathbb{E}\left[
W_{t+1}^{\ast}|\mathcal{F}_{t}\right]  \right)  \widetilde{\Lambda}_{t}\left(
P_{t+1}\right)  .
\]
In this case, $P_{t}=-\widehat{A}_{t}+\left(  1-\widehat{B}_{t}\right)  G_{t}$
where%
\begin{align}
G_{t}  &  =\left[  \mathbb{E}\left[  W_{t+1}^{\ast}|\mathcal{F}_{t}\right]
\widetilde{\Lambda}_{t}\left(  P_{t+1}\right)  \right]  \left[  \Gamma
_{t}\left(  P_{t+1}\right)  \right]  ^{-1}\mathcal{A}_{t}\nonumber\\
&  =\left[  \psi_{t}\left(  P_{t+1}\right)  \right]  ^{-1}\left[
\mathbb{E}\left[  W_{t+1}^{\ast}|\mathcal{F}_{t}\right]  \widetilde{\Lambda
}_{t}\left(  P_{t+1}\right)  \right]  \left[  \psi_{t}\left(  P_{t+1}\right)
I+\left(  \mathcal{B}_{t}\mathbb{E}\left[  W_{t+1}^{\ast}|\mathcal{F}%
_{t}\right]  \right)  \widetilde{\Lambda}_{t}\left(  P_{t+1}\right)  \right]
\mathcal{A}_{t}\nonumber\\
&  =\left[  \psi_{t}\left(  P_{t+1}\right)  \right]  ^{-1}\left[  \left(
1+A_{t}\right)  \mathbb{E}\left[  P_{t+1}|\mathcal{F}_{t}\right]
+\overline{A}_{t}\mathbb{E}\left[  P_{t+1}M_{t+1}|\mathcal{F}_{t}\right]
\right]  . \label{fbsde_linear_1d_eq5}%
\end{align}
Let\
\[
\psi_{t}\left(  P_{t+1}\right)  =1-B_{t}\mathbb{E}\left[  P_{t+1}%
|\mathcal{F}_{t}\right]  -\overline{B}_{t}\mathbb{E}\left[  M_{t+1}%
P_{t+1}|\mathcal{F}_{t}\right]  .
\]
We have $\det\left[  \Gamma_{t}\left(  P_{t+1}\right)  \right]  =\psi
_{t}\left(  P_{t+1}\right)  $. By Theorem
\ref{solution theorem for linear fbsde_2}, the solvability condition is
$\psi_{t}\left(  P_{t+1}\right)  \neq0$.

Notice that $Y_{T}=GX_{T}+g$. Then, we suppose that $Y_{t}=G_{t}X_{t}+g_{t}$
holds and try to prove $Y_{t-1}=G_{t-1}X_{t-1}+g_{t-1}$.%
\[%
\begin{array}
[c]{cl}
& Y_{t-1}\\
= & \mathbb{E}\left[  -\widehat{A}_{t}X_{t}+\left(  1-\widehat{B}_{t}\right)
Y_{t}|\mathcal{F}_{t-1}\right] \\
= & \mathbb{E}\left[  \left(  -\widehat{A}_{t}+\left(  1-\widehat{B}%
_{t}\right)  G_{t}\right)  X_{t}+\left(  1-\widehat{B}_{t}\right)
g_{t}|\mathcal{F}_{t-1}\right] \\
= & \mathbb{E}\left[  \left(  -\widehat{A}_{t}+\left(  1-\widehat{B}%
_{t}\right)  G_{t}\right)  \left(  X_{t-1}+A_{t-1}X_{t-1}+B_{t-1}%
Y_{t-1}\right)  |\mathcal{F}_{t-1}\right] \\
& +\mathbb{E}\left[  \left(  -\widehat{A}_{t}+\left(  1-\widehat{B}%
_{t}\right)  G_{t}\right)  \left(  X_{t-1}\overline{A}_{t-1}+Y_{t-1}%
\overline{B}_{t-1}\right)  M_{t}|\mathcal{F}_{t-1}\right] \\
& +\mathbb{E}\left[  \left(  1-\widehat{B}_{t}\right)  g_{t}|\mathcal{F}%
_{t-1}\right] \\
= & \mathbb{E}\left[  -\widehat{A}_{t}+\left(  1-\widehat{B}_{t}\right)
G_{t}|\mathcal{F}_{t-1}\right]  \left(  X_{t-1}+A_{t-1}X_{t-1}+B_{t-1}%
Y_{t-1}\right) \\
& +\left(  X_{t-1}\overline{A}_{t-1}+Y_{t-1}\overline{B}_{t-1}\right)
\mathbb{E}\left[  \left(  -\widehat{A}_{t}+\left(  1-\widehat{B}_{t}\right)
G_{t}\right)  M_{t}|\mathcal{F}_{t-1}\right] \\
& +\mathbb{E}\left[  \left(  1-\widehat{B}_{t}\right)  g_{t}|\mathcal{F}%
_{t-1}\right]  .
\end{array}
\]
We have%
\begin{equation}%
\begin{array}
[c]{cl}
& \left(  1-\mathbb{E}\left[  P_{t}|\mathcal{F}_{t-1}\right]  B_{t-1}%
-\overline{B}_{t-1}\mathbb{E}\left[  P_{t}M_{t}|\mathcal{F}_{t-1}\right]
\right)  Y_{t-1}=\psi_{t-1}\left(  P_{t}\right)  Y_{t-1}\\
= & \left(  1+A_{t-1}\right)  \left[  \mathbb{E}\left[  P_{t}|\mathcal{F}%
_{t-1}\right]  +\overline{A}_{t-1}\mathbb{E}\left[  P_{t}M_{t}|\mathcal{F}%
_{t-1}\right]  \right]  X_{t-1}+\mathbb{E}\left[  \left(  1-\widehat{B}%
_{t}\right)  g_{t}|\mathcal{F}_{t-1}\right]  .
\end{array}
\label{fbsde_linear_1d_eq6}%
\end{equation}
It is obvious that there exists a unique solution $Y_{t-1}$ to equation
(\ref{fbsde_linear_1d_eq6}) if and only if $\psi_{t-1}\left(  P_{t}\right)
\neq0$. Note that%
\[
\left\{
\begin{array}
[c]{l}%
G_{t-1}=\left[  \psi_{t-1}\left(  P_{t}\right)  \right]  ^{-1}\left[  \left(
1+A_{t-1}\right)  \mathbb{E}\left[  P_{t}|\mathcal{F}_{t-1}\right]
+\overline{A}_{t-1}\mathbb{E}\left[  P_{t}M_{t}|\mathcal{F}_{t-1}\right]
\right]  ,\\
g_{t-1}=\left[  \psi_{t-1}\left(  P_{t}\right)  \right]  ^{-1}\mathbb{E}%
\left[  \left(  1-\widehat{B}_{t}\right)  g_{t}|\mathcal{F}_{t-1}\right]  .
\end{array}
\right.
\]
Thus, $Y_{t-1}=G_{t-1}X_{t-1}+g_{t-1}$ and our result coincides with the one
in Theorem \ref{solution theorem for linear fbsde_2}.
\end{example}

\section{Solvability of nonlinear FBS$\Delta$Es}

In this section we consider the following $1$-dimensional nonlinear
FBS$\Delta$E driven by the martingale difference process $M$:%
\begin{equation}
\left\{
\begin{array}
[c]{rcl}%
\Delta X_{t} & = & b\left(  t,X_{t},Y_{t},Z_{t}\widetilde{I}\right)
+\sigma\left(  t,X_{t},Y_{t},Z_{t}\widetilde{I}\right)  M_{t+1},\\
\Delta Y_{t} & = & -f\left(  t+1,X_{t+1},Y_{t+1},Z_{t+1}\widetilde{I}\right)
+Z_{t}M_{t+1},\\
X_{0} & = & x_{0},\\
Y_{T} & = & h\left(  X_{T}\right)  ,
\end{array}
\right.  \label{nonlinear_fbsde_c_method_m0}%
\end{equation}
where%
\begin{align*}
b  &  :\Omega\times\left\{  0,1,2,...,T-1\right\}  \times\mathbb{R}%
\times\mathbb{R}\times\mathbb{R}^{1\times\left(  N-1\right)  }%
\mathbb{\rightarrow R},\\
\sigma &  :\Omega\times\left\{  0,1,2,...,T-1\right\}  \times\mathbb{R}%
\times\mathbb{R}\times\mathbb{R}^{1\times\left(  N-1\right)  }%
\mathbb{\rightarrow R}^{1\times N},\\
f  &  :\Omega\times\left\{  1,2,...,T\right\}  \times\mathbb{R}\times
\mathbb{R}\times\mathbb{R}^{1\times\left(  N-1\right)  }\mathbb{\rightarrow
R},\\
h  &  :\Omega\times\mathbb{R\rightarrow R}%
\end{align*}
are adapted mappings and continuous with respect to $\left(
x,y,z\widetilde{I}\right)  \in\mathbb{R}\times\mathbb{R}\times\mathbb{R}%
^{1\times\left(  N-1\right)  }$. It is worth to pointing out that by Lemma
\ref{m equality}, the value of the coefficients keep unchanged when $Z_{t}$ is
replaced by a $\thicksim_{M_{t+1}}$ equivalent $\tilde{Z}_{t}$.

Define%
\begin{align*}
\lambda &  =\left(  x,y,z\right)  ,\\
\left\vert \lambda\right\vert  &  =\left\vert x\right\vert +\left\vert
y\right\vert +\left\vert z\widetilde{I}\right\vert ,\\
A\left(  t,\lambda\right)   &  =\left(  -f\left(  t,\lambda\right)  ,b\left(
t,\lambda\right)  ,\sigma\left(  t,\lambda\right)  \mathbb{E}\left[
M_{t+1}M_{t+1}^{\ast}|\mathcal{F}_{t}\right]  \right)  ,\\
\left\vert A\left(  t,\lambda\right)  \right\vert  &  =\left\vert f\left(
t,\lambda\right)  \right\vert +\left\vert b\left(  t,\lambda\right)
\right\vert +\left\vert \sigma\left(  t,\lambda\right)  \mathbb{E}\left[
M_{t+1}M_{t+1}^{\ast}|\mathcal{F}_{t}\right]  \right\vert .
\end{align*}

\begin{assumption}
\label{assumption for nonlinear fbsde}(i) The coefficients are
uniform\ Lipschitz continuous with respect to $\lambda$, i.e. there exists a
constant $c_{1}>0$, such that for any $t\in\left\{  0,1,...,T-1\right\}  $,%
\begin{align*}
\left\vert A\left(  t,\lambda\right)  -A\left(  t,\lambda^{^{\prime}}\right)
\right\vert  &  \leq c_{1}\left\vert \lambda-\lambda^{^{\prime}}\right\vert
,\\
\left\vert h\left(  x\right)  -h\left(  x^{^{\prime}}\right)  \right\vert  &
\leq c_{1}\left\vert x-x^{^{\prime}}\right\vert .
\end{align*}

Moreover, $\left\vert f\left(  T,x,y,z\widetilde{I}\right)  -f\left(
T,x^{^{\prime}},y^{^{\prime}},z^{^{\prime}}\widetilde{I}\right)  \right\vert
\leq c_{1}\left(  \left\vert x-x^{^{\prime}}\right\vert +\left\vert
y-y^{^{\prime}}\right\vert \right)  ,$

(ii) The coefficients satisfy monotone condition, i.e. there exists a constant
$c_{2}>0$, such that

when $t\in\left\{  1,...,T-1\right\}  $,%
\[
\left\langle A\left(  t,\lambda\right)  -A\left(  t,\lambda^{^{\prime}%
}\right)  ,\lambda-\lambda^{^{\prime}}\right\rangle \leq-c_{2}\left\vert
\lambda-\lambda^{^{\prime}}\right\vert ^{2},
\]

when $t=T$,%
\[
\left\langle -f\left(  T,x,y,z\widetilde{I}\right)  +f\left(  T,x^{^{\prime}%
},y^{^{\prime}},z^{^{\prime}}\widetilde{I}\right)  ,x-x^{^{\prime}%
}\right\rangle \leq-c_{2}\left\vert x-x^{^{\prime}}\right\vert ^{2},
\]

when $t=0$,%
\[%
\begin{array}
[c]{cl}
& \left\langle b\left(  0,\lambda\right)  -b\left(  0,\lambda^{^{\prime}%
}\right)  ,y-y^{^{\prime}}\right\rangle \\
& +\left\langle \left(  \sigma\left(  0,\lambda\right)  -\sigma\left(
0,\lambda^{^{\prime}}\right)  \right)  \mathbb{E}\left[  M_{1}M_{1}^{\ast
}|\mathcal{F}_{0}\right]  ,z-z^{^{\prime}}\right\rangle \\
\leq & -c_{2}\left[  \left\vert y-y^{^{\prime}}\right\vert ^{2}+\left\vert
\left(  z-z^{^{\prime}}\right)  \widetilde{I}\right\vert ^{2}\right]
\end{array}
\]

and%
\[
\left\langle h\left(  x\right)  -h\left(  x^{^{\prime}}\right)  ,x-x^{^{\prime
}}\right\rangle \geq c_{2}\left\vert x-x^{^{\prime}}\right\vert ^{2}.
\]

\end{assumption}

Then we have the following existence and uniqueness theorem for FBS$\Delta
$E\thinspace(\ref{nonlinear_fbsde_c_method_m0}). Note that the uniqueness for
$X$ and $Y$ is in the sense of indistinguishability and for $Z$ is in the
sense of $\thicksim_{M}$ equivalence.

\begin{theorem}
\label{c4_th_m_fbsde_ex}Under Assumption \ref{assumption for nonlinear fbsde},
FBS$\Delta$E\thinspace(\ref{nonlinear_fbsde_c_method_m0}) has a unique adapted
solution $\left(  X,Y,Z\right)  \in\mathcal{M}\left(  0,T;\mathbb{R}\right)
\times\mathcal{M}\left(  0,T;\mathbb{R}\right)  \times\mathcal{M}\left(
0,T-1;\mathbb{R}^{1\times N}\right)  $.
\end{theorem}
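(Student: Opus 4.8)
The plan is to prove Theorem~\ref{c4_th_m_fbsde_ex} by the continuation (method of continuity) argument of Hu--Peng adapted to the discrete setting, using Corollary~\ref{solution corollary for linear fbsde_2} as the base case. Concretely, for $\alpha\in[0,1]$ introduce the family of FBS$\Delta$Es
\begin{equation*}
\left\{
\begin{array}{rcl}
\Delta X_t &=& \alpha\, b(t,X_t,Y_t,Z_t\widetilde I) + (1-\alpha)(-Y_t) + D_t + \big(\alpha\,\sigma(t,X_t,Y_t,Z_t\widetilde I) + (1-\alpha)(-Z_t\widetilde I) + \overline D_t\big)M_{t+1},\\
\Delta Y_t &=& -\alpha\, f(t+1,X_{t+1},Y_{t+1},Z_{t+1}\widetilde I) + (1-\alpha)(-X_{t+1}) + \widehat D_{t+1} + Z_t M_{t+1},\\
X_0 &=& x_0,\quad Y_T = \alpha\, h(X_T) + (1-\alpha)X_T + g,
\end{array}
\right.
\end{equation*}
where $(D,\overline D,\widehat D,g)$ are arbitrary inhomogeneous data in the spaces of Corollary~\ref{solution corollary for linear fbsde_2}. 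At $\alpha=0$ this is exactly the linear equation \eqref{fbsde_linear_1d_special}, which is uniquely solvable; the goal is to show solvability at $\alpha=1$ with zero inhomogeneous terms. Let $\mathcal I\subset[0,1]$ be the set of $\alpha$ for which the system above is uniquely solvable for \emph{every} choice of inhomogeneous data; I want to show $\mathcal I=[0,1]$, and since $0\in\mathcal I$ it suffices to prove that there is a fixed $\delta_0>0$ such that $\alpha_0\in\mathcal I$ implies $[\alpha_0,\alpha_0+\delta_0]\cap[0,1]\subset\mathcal I$.

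The key estimate is an a priori bound. Suppose $(X,Y,Z)$ and $(X',Y',Z')$ solve the $\alpha$-system with data $(D,\overline D,\widehat D,g)$ and $(D',\overline D',\widehat D',g')$ respectively; set $\widehat X=X-X'$, etc. The crucial tool is the discrete product rule highlighted in the introduction,
\[
\Delta\langle X_t,Y_t\rangle = \langle X_{t+1},\Delta Y_t\rangle + \langle \Delta X_t, Y_t\rangle,
\]
which I apply to $\widehat X_t\widehat Y_t$ and sum from $t=0$ to $T-1$. Plugging in the dynamics and taking expectations, the $Z_tM_{t+1}$ cross terms get handled using $\mathbb E[\widehat X_{t+1}\widehat Z_tM_{t+1}\mid\mathcal F_t]=\mathbb E[\widehat Z_tM_{t+1}M_{t+1}^\ast\widehat\sigma_t^{(\alpha)\ast}\mid\mathcal F_t]$ (this is why the monotone condition is stated with the $\mathbb E[M_{t+1}M_{t+1}^\ast\mid\mathcal F_t]$ factor), and the telescoped boundary term produces $\mathbb E[\widehat X_T\widehat Y_T]-\widehat X_0\widehat Y_0 = \mathbb E[\widehat X_T(\alpha\,\widehat{h(X_T)}+(1-\alpha)\widehat X_T+\widehat g)]$. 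Using the monotone conditions at $t=0$, $1\le t\le T-1$, $t=T$ together with the convexity combination with the $(1-\alpha)$ linear part (which is itself monotone with constant $1$), one obtains
\[
c\,\mathbb E\Big[|\widehat X_T|^2 + \sum_{t=0}^{T-1}\big(|\widehat X_t|^2+|\widehat Y_t|^2+|\widehat Z_t\widetilde I|^2\big)\Big] \le \mathbb E\Big[\text{(bilinear terms in the solution and the data differences)}\Big],
\]
for a constant $c>0$ independent of $\alpha$; here I use Proposition~\ref{norm equality} to pass between $|\widehat Z_t\widetilde I|^2$ and $\mathbb E|\widehat Z_tM_{t+1}|^2$ whenever the latter appears. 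By Young's inequality and the Lipschitz bounds of Assumption~\ref{assumption for nonlinear fbsde}(i), the right side is absorbed to give
\[
\|\widehat X\|^2+\|\widehat Y\|^2+\|\widehat Z\widetilde I\|^2 \le K\big(\|D-D'\|^2+\|\overline D-\overline D'\|^2+\|\widehat D-\widehat D'\|^2+\mathbb E|g-g'|^2\big)
\]
with $K$ independent of $\alpha$. Taking $D=D'$, etc., this gives uniqueness; and a completely parallel but simpler estimate (comparing a solution to zero, using the Lipschitz growth to control $|A(t,0)|$ and $|h(0)|$) gives the a priori norm bound needed for existence.

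With the uniform a priori estimate in hand, the continuation step is standard: given $\alpha_0\in\mathcal I$ and $\alpha=\alpha_0+\delta$, rewrite the $\alpha$-system as the $\alpha_0$-system with the extra $(\alpha-\alpha_0)$-terms moved into the inhomogeneous data, and set up a map $\Phi$ on $\mathcal M(0,T;\mathbb R)\times\mathcal M(0,T;\mathbb R)\times\mathcal M(0,T-1;\mathbb R^{1\times N})$ sending an input triple to the unique solution of the resulting $\alpha_0$-system; the a priori estimate shows $\Phi$ is a contraction once $\delta<\delta_0:=1/(2K)^{1/2}$ (roughly), so it has a fixed point, which solves the $\alpha$-system. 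Hence $\mathcal I=[0,1]$, and taking $\alpha=1$ with $D=\overline D=\widehat D=0$, $g=0$ yields existence for \eqref{nonlinear_fbsde_c_method_m0}; uniqueness is the $D=D'$ case of the estimate at $\alpha=1$, with $X,Y$ unique up to indistinguishability and $Z$ up to $\thicksim_M$ by Proposition~\ref{norm equality}. I expect the main obstacle to be the bookkeeping in the a priori estimate — in particular checking that the discrete product rule, the conditional-expectation manipulation of the $Z_tM_{t+1}$ terms, and the boundary term at $t=T$ (which uses the separate $t=T$ monotone hypotheses on $f$ and $h$, since $Z_T$ does not appear) combine with the correct signs so that all three monotone inequalities push in the same direction; getting the constant $c>0$ to come out uniformly in $\alpha$ is the delicate point.
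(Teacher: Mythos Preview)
Your proposal is correct and follows essentially the same approach as the paper: the Hu--Peng continuation method with the linear FBS$\Delta$E of Corollary~\ref{solution corollary for linear fbsde_2} as the $\alpha=0$ base case, the discrete product rule $\Delta(\widehat X_t\widehat Y_t)=\widehat X_{t+1}\Delta\widehat Y_t+(\Delta\widehat X_t)\widehat Y_t$, the monotone conditions, and Proposition~\ref{norm equality} to handle the $Z$-terms. The only cosmetic difference is in the continuation step: the paper sets up a Picard iteration and obtains a second-order recurrence on $\mathbb E\sum_t|\widehat\Lambda_t^{i}|^2$ (resolved via Lemma~4.1 of \cite{hp95}), whereas you first derive a uniform-in-$\alpha$ stability estimate and then use it directly to show the solution map $\Phi$ is a contraction for small $\delta$; both implementations are standard and yield the same $\delta_0$ depending only on $c_1,c_2,T$.
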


\begin{proof}
We first prove the uniqueness.

Suppose that $\Lambda=\left(  X,Y,Z\right)  $ and $\Lambda^{^{\prime}}=\left(
X^{^{\prime}},Y^{^{\prime}},Z^{^{\prime}}\right)  $ are two solutions for
FBS$\Delta$E\thinspace(\ref{nonlinear_fbsde_c_method_m0}). Define%
\[
\left(  \widehat{X},\widehat{Y},\widehat{Z}\right)  =\left(  X-X^{^{\prime}%
},Y-Y^{^{\prime}},Z-Z^{^{\prime}}\right)
\]
and%
\begin{align*}
\widehat{b}\left(  t\right)   &  =b\left(  t,X_{t},Y_{t},Z_{t}\widetilde{I}%
\right)  -b\left(  t,X_{t}^{\prime},Y_{t}^{\prime},Z_{t}^{^{\prime}%
}\widetilde{I}\right)  ,\\
\widehat{\sigma}\left(  t\right)   &  =\sigma\left(  t,X_{t},Y_{t}%
,Z_{t}\widetilde{I}\right)  -\sigma\left(  t,X_{t}^{\prime},Y_{t}^{\prime
},Z_{t}^{\prime}\widetilde{I}\right)  ,\\
\widehat{f}\left(  t\right)   &  =f\left(  t,X_{t},Y_{t},Z_{t}\widetilde{I}%
\right)  -f\left(  t,X_{t}^{\prime},Y_{t}^{^{\prime}},Z_{t}^{^{\prime}%
}\widetilde{I}\right)  .
\end{align*}
For $t\in\left\{  0,1,...,T-1\right\}  $, we have%
\[%
\begin{array}
[c]{cl}
& \Delta\left(  \widehat{X}_{t}\widehat{Y}_{t}\right) \\
= & \widehat{X}_{t+1}\left(  \Delta\widehat{Y}_{t}\right)  +\left(
\Delta\widehat{X}_{t}\right)  \widehat{Y}_{t}\\
= & -\widehat{X}_{t+1}\widehat{f}\left(  t+1\right)  +\left(  \widehat{X}%
_{t}+\Delta\widehat{X}_{t}\right)  \widehat{Z}_{t}M_{t+1}\\
& +\widehat{b}\left(  t\right)  \widehat{Y}_{t}+\widehat{\sigma}\left(
t\right)  M_{t+1}\widehat{Y}_{t}\\
= & -\widehat{X}_{t+1}\widehat{f}\left(  t+1\right)  +\left(  \widehat{\sigma
}\left(  t\right)  M_{t+1}\right)  \widehat{Z}_{t}M_{t+1}+\widehat{b}\left(
t\right)  \widehat{Y}_{t}+\Phi_{t}\\
= & -\widehat{X}_{t+1}\widehat{f}\left(  t+1\right)  +\left\langle
\widehat{\sigma}\left(  t\right)  M_{t+1}M_{t+1}^{\ast},\widehat{Z}%
_{t}\right\rangle +\widehat{b}\left(  t\right)  \widehat{Y}_{t}+\Phi_{t}%
\end{array}
\]
where%
\[
\Phi_{t}=\left(  X_{t}+\widehat{b}\left(  t\right)  \right)  \widehat{Z}%
_{t}M_{t+1}+\widehat{\sigma}\left(  t\right)  M_{t+1}\widehat{Y}_{t}.
\]
Then,%
\[%
\begin{array}
[c]{cl}
& \mathbb{E}\left[  \left(  X_{T}-X_{T}^{\prime}\right)  \left[  h\left(
X_{T}\right)  -h\left(  X_{T}^{\prime}\right)  \right]  \right] \\
= & \mathbb{E}\left[  \widehat{X}_{T}\widehat{Y}_{T}\right] \\
= & \mathbb{E}\sum\limits_{t=0}^{T-1}\Delta\left(  \widehat{X}_{t}%
\widehat{Y}_{t}\right) \\
= & \mathbb{E}\left[  \sum\limits_{t=0}^{T-2}\widehat{X}_{t+1}\widehat{f}%
\left(  t+1\right)  +\sum\limits_{t=0}^{T-1}\left\langle \widehat{\sigma
}\left(  t\right)  M_{t+1}M_{t+1}^{\ast},\widehat{Z}_{t}\right\rangle \right]
\\
& +\mathbb{E}\left[  \sum\limits_{t=0}^{T-1}\widehat{b}\left(  t\right)
\widehat{Y}_{t}-\widehat{X}_{T}\widehat{f}\left(  T\right)  \right] \\
= & \mathbb{E}\left[  \sum\limits_{t=1}^{T-1}\left\langle A\left(
t,\Lambda\right)  -A\left(  t,\Lambda^{^{\prime}}\right)  ,\Lambda
-\Lambda^{^{\prime}}\right\rangle \right] \\
& +\mathbb{E}\left[  \widehat{b}\left(  0\right)  \widehat{Y}_{0}+\left\langle
\widehat{\sigma}\left(  0\right)  M_{1}M_{1}^{\ast},\widehat{Z}_{0}%
\right\rangle -\widehat{X}_{T}\widehat{f}\left(  T\right)  \right]  .
\end{array}
\]
By the monotone condition,%
\[%
\begin{array}
[c]{cl}
& c_{2}\mathbb{E}\left\vert X_{T}-X_{T}^{\prime}\right\vert ^{2}\\
\leq & \mathbb{E}\left[  \left(  X_{T}-X_{T}^{\prime}\right)  \left[  h\left(
X_{T}\right)  -h\left(  X_{T}^{\prime}\right)  \right]  \right] \\
= & \mathbb{E}\left[  \sum\limits_{t=1}^{T-1}\left\langle A\left(
t,\Lambda\right)  -A\left(  t,\Lambda^{^{\prime}}\right)  ,\Lambda
-\Lambda^{^{\prime}}\right\rangle \right] \\
& +\mathbb{E}\left[  \widehat{b}\left(  0\right)  \widehat{Y}_{0}+\left\langle
\widehat{\sigma}\left(  0\right)  \mathbb{E}\left[  M_{1}M_{1}^{\ast
}|\mathcal{F}_{0}\right]  ,\widehat{Z}_{0}\right\rangle -\widehat{X}%
_{T}\widehat{f}\left(  T\right)  \right] \\
\leq & -c_{2}\mathbb{E}\left[  \sum\limits_{t=0}^{T}\left\vert X_{t}%
-X_{t}^{^{\prime}}\right\vert ^{2}+\sum\limits_{t=0}^{T-1}\left\vert
Y_{t}-Y_{t}^{^{\prime}}\right\vert ^{2}+\sum\limits_{t=0}^{T-1}\left\vert
\left(  Z_{t}-Z_{t}^{^{\prime}}\right)  \widetilde{I}\right\vert ^{2}\right]
\end{array}
\]
which yields that%
\[
\mathbb{E}\left[  \sum_{t=0}^{T}\left\vert X_{t}-X_{t}^{^{\prime}}\right\vert
^{2}+\sum_{t=0}^{T-1}\left\vert Y_{t}-Y_{t}^{^{\prime}}\right\vert ^{2}%
+\sum_{t=0}^{T-1}\left\vert \left(  Z_{t}-Z_{t}^{^{\prime}}\right)
\widetilde{I}\right\vert ^{2}\right]  =0.
\]
Thus, we deduce that $X=X^{^{\prime}}$, $Y=Y^{^{\prime}}$, $Z\thicksim
_{M}Z^{^{\prime}}$.

Now we prove the existence.

Introduce the following family of FBS$\Delta$Es parameterized by $\alpha
\in\left[  0,1\right]  $:%
\begin{equation}
\left\{
\begin{array}
[c]{rcl}%
\Delta X_{t} & = & b^{\alpha}\left(  t,X_{t},Y_{t},Z_{t}\widetilde{I}\right)
+b_{0}\left(  t\right)  +\left[  \sigma^{\alpha}\left(  t,X_{t},Y_{t}%
,Z_{t}\widetilde{I}\right)  +\sigma_{0}\left(  t\right)  \right]  M_{t+1},\\
\Delta Y_{t} & = & -f^{\alpha}\left(  t+1,X_{t+1},Y_{t+1},Z_{t+1}%
\widetilde{I}\right)  -f_{0}\left(  t+1\right)  +Z_{t}M_{t+1},\\
X_{0} & = & x_{0},\\
Y_{T} & = & h^{\alpha}\left(  X_{T}\right)  +h_{0}%
\end{array}
\right.  \label{nonlinear_fbsde_c_method_m1}%
\end{equation}
where%
\begin{align*}
b^{\alpha}\left(  t,x,y,z\right)   &  =\alpha b\left(  t,x,y,z\widetilde{I}%
\right)  +\left(  1-\alpha\right)  \left(  -y\right)  ,\\
\sigma^{\alpha}\left(  t,x,y,z\right)   &  =\alpha\sigma\left(
t,x,y,z\widetilde{I}\right)  +\left(  1-\alpha\right)  \left(  -z\right)  ,\\
f^{\alpha}\left(  t,x,y,z\right)   &  =\alpha f\left(  t,x,y,z\widetilde{I}%
\right)  +\left(  1-\alpha\right)  x,\\
h^{a}\left(  x\right)   &  =\alpha h\left(  x\right)  +\left(  1-\alpha
\right)  x.
\end{align*}
When $\alpha=1$, FBS$\Delta$E (\ref{nonlinear_fbsde_c_method_m1}) becomes
FBS$\Delta$E (\ref{nonlinear_fbsde_c_method_m0}). On the other hand, when
$\alpha=0$, FBS$\Delta$E (\ref{nonlinear_fbsde_c_method_m1}) becomes a linear
equation:%
\begin{equation}
\left\{
\begin{array}
[c]{rcl}%
\Delta X_{t} & = & -Y_{t}+b_{0}\left(  t\right)  +\left[  -Z_{t}+\sigma
_{0}\left(  t\right)  \right]  M_{t+1},\\
\Delta Y_{t} & = & -X_{t+1}-f_{0}\left(  t+1\right)  +Z_{t}M_{t+1},\\
X_{0} & = & x_{0},\\
Y_{T} & = & X_{T}+h_{0}.
\end{array}
\right.  \label{nonlinear_fbsde_c_method_m2}%
\end{equation}
By Corollary \ref{solution corollary for linear fbsde_2}, we know
that\ (\ref{nonlinear_fbsde_c_method_m2}) has a unique solution.

To complete the proof, we need the following lemma.

\begin{lemma}
\label{nonlinear_fbsde_c_method_mlemma}Suppose that there exists an
$\alpha_{0}\in\left[  0,1\right)  $ such that for any \thinspace$\left(
b_{0}\left(  \cdot\right)  ,\sigma_{0}\left(  \cdot\right)  ,f_{0}\left(
\cdot\right)  \right)  \in\mathcal{M}\left(  0,T;\mathbb{R}\times
\mathbb{R}^{1\times N}\times\mathbb{R}\right)  $, $h_{0}\in L\left(
\mathcal{F}_{T};\mathbb{R}\right)  $, (\ref{nonlinear_fbsde_c_method_m1}) has
a unique solution. Then there exists a $\delta_{0}\in\left(  0,1\right)  $,
which only depends on $c_{1},$ $c_{2}$ and $T$, such that for any $\alpha
\in\left[  \alpha_{0},\alpha_{0}+\delta_{0}\right]  $, $\left(  b_{0}\left(
\cdot\right)  ,\sigma_{0}\left(  \cdot\right)  ,f_{0}\left(  \cdot\right)
\right)  \in\mathcal{M}\left(  0,T;\mathbb{R}\times\mathbb{R}^{1\times
N}\times\mathbb{R}\right)  $ and $h_{0}\in L\left(  \mathcal{F}_{T}%
;\mathbb{R}\right)  $, (\ref{nonlinear_fbsde_c_method_m1}) has a unique solution.
\end{lemma}

\begin{proof}
Note that%
\begin{align*}
b^{\alpha_{0}+\delta}\left(  t,x,y,z\widetilde{I}\right)   &  =b^{\alpha_{0}%
}\left(  t,x,y,z\widetilde{I}\right)  +\delta\left(  y+b\left(
t,x,y,z\widetilde{I}\right)  \right)  ,\\
\sigma^{\alpha_{0}+\delta}\left(  t,x,y,z\widetilde{I}\right)   &
=\sigma^{\alpha_{0}}\left(  t,x,y,z\widetilde{I}\right)  +\delta\left(
z+\sigma\left(  t,x,y,z\widetilde{I}\right)  \right)  ,\\
f^{\alpha_{0}+\delta}\left(  t,x,y,z\widetilde{I}\right)   &  =f^{\alpha_{0}%
}\left(  t,x,y,z\widetilde{I}\right)  +\delta\left(  -x+f\left(
t,x,y,z\widetilde{I}\right)  \right)  ,\\
h^{\alpha_{0}+\delta}\left(  x\right)   &  =h^{\alpha_{0}}\left(  x\right)
+\delta\left(  -x+h\left(  x\right)  \right)  .
\end{align*}
Let $\Lambda^{i}=\left(  X^{i},Y^{i},Z^{i}\right)  $ and $\Lambda^{0}=0$. Then
we solve iteratively the following equations:%
\[
\left\{
\begin{array}
[c]{rcl}%
\Delta X_{t}^{i+1} & = & b^{\alpha_{0}}\left(  t,\Lambda_{t}^{i+1}\right)
+\delta\left(  Y_{t}^{i}+b\left(  t,\Lambda_{t}^{i}\right)  \right)
+b_{0}\left(  t\right) \\
&  & +\left[  \sigma^{\alpha_{0}}\left(  t,\Lambda_{t}^{i+1}\right)
+\delta\left(  Z_{t}^{i}+\sigma\left(  t,\Lambda_{t}^{i}\right)  \right)
+\sigma_{0}\left(  t\right)  \right]  M_{t+1},\\
\Delta Y_{t}^{i+1} & = & -f^{\alpha_{0}}\left(  t+1,\Lambda_{t+1}%
^{i+1}\right)  -\delta\left(  -X_{t+1}^{i}+f\left(  t+1,\Lambda_{t+1}%
^{i}\right)  \right) \\
&  & -f_{0}\left(  t+1\right)  +Z_{t}^{i+1}M_{t+1},\\
X_{0}^{i+1} & = & x_{0},\\
Y_{T}^{i+1} & = & h^{\alpha_{0}}\left(  X_{T}^{i+1}\right)  +\delta\left(
-X_{T}^{i}+h\left(  X_{T}^{i}\right)  \right)  +h_{0}.
\end{array}
\right.
\]
By calculating $\mathbb{E}\sum_{t=0}^{T-1}\Delta\left(  \widehat{X}_{t}%
^{i+1}\widehat{Y}_{t}^{i+1}\right)  $ as in the proof of the uniqueness part,
we have%
\[%
\begin{array}
[c]{cl}
& \mathbb{E}\left[  \widehat{X}_{T}^{i+1}\left(  h^{\alpha_{0}}\left(
X_{T}^{i+1}\right)  -h^{\alpha_{0}}\left(  X_{T}^{i}\right)  \right)
+\widehat{X}_{T}^{i+1}\delta\left(  -\left(  X_{T}^{i}-X_{T}^{i-1}\right)
+h\left(  X_{T}^{i}\right)  -h\left(  X_{T}^{i-1}\right)  \right)  \right] \\
= & \mathbb{E}\left[  \widehat{X}_{T}^{i+1}\widehat{Y}_{T}^{i+1}\right] \\
= & \mathbb{E}\left[  \varphi_{0}+\sum_{t=1}^{T-1}\varphi_{t}+\varphi
_{T}+\delta\left(  \psi_{0}+\sum_{t=1}^{T-1}\psi_{t}+\psi_{T}\right)  \right]
,
\end{array}
\]
where%
\[
\left\{
\begin{array}
[c]{ccl}%
\varphi_{0} & = & \alpha_{0}\left[  b\left(  0,\Lambda_{0}^{i+1}\right)
-b\left(  0,\Lambda_{0}^{i}\right)  \right]  \widehat{Y}_{0}^{i+1}-\left(
1-\alpha_{0}\right)  \left(  \widehat{Y}_{0}^{i+1}\right)  ^{2}\\
&  & +\alpha_{0}\left(  \sigma\left(  0,\Lambda_{0}^{i+1}\right)
-\sigma\left(  0,\Lambda_{0}^{i}\right)  \right)  \mathbb{E}\left[  M_{1}%
M_{1}^{\ast}|\mathcal{F}_{0}\right]  \widehat{Z}_{0}^{i+1}-\left(
1-\alpha_{0}\right)  \left(  \widehat{Z}_{0}^{i+1}M_{1}\right)  ^{2},\\
\varphi_{t} & = & -\left(  1-\alpha_{0}\right)  \left(  \widehat{X}_{t}%
^{i+1}\right)  ^{2}-\left(  1-\alpha_{0}\right)  \left(  \widehat{Y}_{t}%
^{i+1}\right)  ^{2}-\left(  1-\alpha_{0}\right)  \left(  \widehat{Z}_{t}%
^{i+1}M_{t+1}\right)  ^{2}\\
&  & +\alpha_{0}\left\langle A\left(  t,\Lambda_{t}^{i+1}\right)  -A\left(
t,\Lambda_{t}^{i}\right)  ,\widehat{\Lambda}_{t}^{i+1}\right\rangle ,\\
\varphi_{T} & = & \alpha_{0}\left[  -f\left(  X_{T}^{i+1},Y_{T}^{i+1}\right)
+f\left(  X_{T}^{i},Y_{T}^{i}\right)  \right]  \widehat{X}_{T}^{i+1}-\left(
1-\alpha_{0}\right)  \left(  \widehat{X}_{T}^{i+1}\right)  ^{2},\\
\psi_{0} & = & \left[  b\left(  0,\Lambda_{0}^{i}\right)  -b\left(
0,\Lambda_{0}^{i-1}\right)  \right]  \widehat{Y}_{0}^{i+1}+\widehat{Y}_{0}%
^{i}\widehat{Y}_{0}^{i+1}\\
&  & +\left\langle \left(  \sigma\left(  0,\Lambda_{0}^{i}\right)
-\sigma\left(  0,\Lambda_{0}^{i-1}\right)  \right)  M_{1},\widehat{Z}%
_{0}^{i+1}M_{1}\right\rangle +\left\langle \widehat{Z}_{0}^{i}M_{1}%
,\widehat{Z}_{0}^{i+1}M_{1}\right\rangle ,\\
\psi_{t} & = & \widehat{X}_{t}^{i}\widehat{X}_{t}^{i+1}+\widehat{Y}_{t}%
^{i}\widehat{Y}_{t}^{i+1}+\left(  \widehat{Z}_{t}^{i}M_{t+1}\right)  \left(
\widehat{Z}_{t}^{i+1}M_{t+1}\right) \\
&  & +\left\langle A\left(  t,\Lambda_{t}^{i}\right)  -A\left(  t,\Lambda
_{t}^{i-1}\right)  ,\widehat{\Lambda}_{t}^{i+1}\right\rangle ,\\
\psi_{T} & = & \left[  -f\left(  X_{T}^{i},Y_{T}^{i}\right)  +f\left(
X_{T}^{i\_1},Y_{T}^{i-1}\right)  \right]  \widehat{X}_{T}^{i+1}+\widehat{X}%
_{T}^{i}\widehat{X}_{T}^{i+1}%
\end{array}
\right.
\]
and $\widehat{\Lambda}_{t}^{i}=\Lambda_{t}^{i}-\Lambda_{t}^{i-1}$. By
Proposition \ref{norm equality}, we have $\underline{L}\mathbb{E}\left\vert
\widehat{Z}_{t}\widetilde{I}\right\vert ^{2}\leq\mathbb{E}\left(
\widehat{Z}_{t}M_{t+1}\right)  ^{2}\leq\overline{L}\mathbb{E}\left\vert
\widehat{Z}_{t}\widetilde{I}\right\vert ^{2}$. Set\ $\underline{c}%
=\min\left\{  \underline{L},c_{2}\right\}  $, $\overline{c}=\max\left\{
\overline{L},1\right\}  $. Then we obtain%
\[%
\begin{array}
[c]{cl}
& \mathbb{E}\left[  2\left\vert \widehat{X}_{T}^{i+1}\right\vert ^{2}%
+\sum\limits_{t=0}^{T-1}\left\vert \widehat{\Lambda}_{t}^{i+1}\right\vert
^{2}\right] \\
\leq & \frac{\delta\left(  \overline{c}+c_{1}\right)  }{\underline{c}}\left(
2\mathbb{E}\left\vert \widehat{X}_{T}^{i}\right\vert \left\vert \widehat{X}%
_{T}^{i+1}\right\vert +\mathbb{E}\sum\limits_{t=0}^{T-1}\left\vert
\widehat{\Lambda}_{t}^{i}\right\vert \left\vert \widehat{\Lambda}_{t}%
^{i+1}\right\vert \right) \\
\leq & \frac{1}{2}\left(  \frac{\delta\left(  \overline{c}+c_{1}\right)
}{\underline{c}}\right)  ^{2}\left(  2\mathbb{E}\left\vert \widehat{X}_{T}%
^{i}\right\vert ^{2}+\mathbb{E}\sum\limits_{t=0}^{T-1}\left\vert
\widehat{\Lambda}_{t}^{i}\right\vert ^{2}\right)  +\frac{1}{2}\left(
2\mathbb{E}\left\vert \widehat{X}_{T}^{i+1}\right\vert ^{2}+\mathbb{E}%
\sum\limits_{t=0}^{T-1}\left\vert \widehat{\Lambda}_{t}^{i+1}\right\vert
^{2}\right)
\end{array}
\]
and%
\begin{equation}
2\mathbb{E}\left\vert \widehat{X}_{T}^{i+1}\right\vert ^{2}+\mathbb{E}%
\sum_{t=0}^{T-1}\left\vert \widehat{\Lambda}_{t}^{i+1}\right\vert ^{2}%
\leq\left(  \frac{\delta\left(  \overline{c}+c_{1}\right)  }{\underline{c}%
}\right)  ^{2}\left(  2\mathbb{E}\left\vert \widehat{X}_{T}^{i}\right\vert
^{2}+\mathbb{E}\sum_{t=0}^{T-1}\left\vert \widehat{\Lambda}_{t}^{i}\right\vert
^{2}\right)  . \label{nonlinear_fbsde_c_method_meq3}%
\end{equation}
We also have the following estimation%
\begin{equation}
\mathbb{E}\left\vert \widehat{X}_{T}^{i}\right\vert ^{2}=\mathbb{E}\left\vert
\sum_{t=0}^{T-1}\Delta\widehat{X}_{t}^{i}\right\vert ^{2}\leq T\left\vert
\Delta\widehat{X}_{t}^{i}\right\vert ^{2}\leq c_{3}\mathbb{E}\sum_{t=0}%
^{T-1}\left(  \left\vert \widehat{\Lambda}_{t}^{i}\right\vert ^{2}+\left\vert
\widehat{\Lambda}_{t}^{i-1}\right\vert ^{2}\right)
\label{nonlinear_fbsde_c_method_meq4}%
\end{equation}
where $c_{3}>0$ only depends on $c_{1}$ and $T$.

Combining with (\ref{nonlinear_fbsde_c_method_meq3}) and
(\ref{nonlinear_fbsde_c_method_meq4}), there exists a constant\ $c_{4}>0$
which only depends on $c_{1},$ $c_{2}$ and $T$, such that%
\[
\mathbb{E}\sum_{t=0}^{T-1}\left\vert \widehat{\Lambda}_{t}^{i+1}\right\vert
^{2}\leq c_{4}\delta^{2}\left(  \mathbb{E}\sum_{t=0}^{T-1}\left\vert
\widehat{\Lambda}_{t}^{i}\right\vert ^{2}+\mathbb{E}\sum_{t=0}^{T-1}\left\vert
\widehat{\Lambda}_{t}^{i-1}\right\vert ^{2}\right)  .
\]
So there exists $\delta_{0}\in\left(  0,1\right)  $ which only depends on
$c_{1},$ $c_{2}$ and $T$, such that for $0<\delta\leq\delta_{0}$,%
\[
\mathbb{E}\sum_{t=0}^{T-1}\left\vert \widehat{\Lambda}_{t}^{i+1}\right\vert
^{2}\leq\frac{1}{4}\mathbb{E}\sum_{t=0}^{T-1}\left\vert \widehat{\Lambda}%
_{t}^{i}\right\vert ^{2}+\frac{1}{8}\mathbb{E}\sum_{t=0}^{T-1}\left\vert
\widehat{\Lambda}_{t}^{i-1}\right\vert ^{2},\forall i\geq1.
\]
By Lemma 4.1 in \cite{hp95}, $\left\{  \Lambda_{t}^{i}\right\}  _{t=0}^{T-1}$
is a Cauchy sequence in $\mathcal{M}\left(  0,T-1;\mathbb{R}\times
\mathbb{R}\times\mathbb{R}^{1\times N}\right)  $. Taking $\left\{  \Lambda
_{t}\right\}  _{t=0}^{T-1}=\lim_{i\rightarrow\infty}\left\{  \Lambda_{t}%
^{i}\right\}  _{t=0}^{T-1}$, it is easy to check that for $0<\delta\leq
\delta_{0}$, $\Lambda=\left(  X,Y,Z\right)  $ is the solution to FBS$\Delta$E
(\ref{nonlinear_fbsde_c_method_m1}) with $\alpha=\alpha_{0}+\delta$. \ This
completes the proof of this lemma.
\end{proof}

Now we can finish the proof of the existence. When $\alpha=0$, for any
$\left(  b_{0}\left(  \cdot\right)  ,\sigma_{0}\left(  \cdot\right)
,f_{0}\left(  \cdot\right)  \right)  \in\mathcal{M}\left(  0,T;\mathbb{R}%
\times\mathbb{R}^{1\times N}\times\mathbb{R}\right)  $ and $h_{0}\in L\left(
\mathcal{F}_{T};\mathbb{\mathbb{R}}\right)  $, there exists a unique solution
to FBS$\Delta$E\ (\ref{nonlinear_fbsde_c_method_m1}). By Lemma
\ref{nonlinear_fbsde_c_method_mlemma}, there exists a constant $\delta_{0}$
which only depends on $c_{1},$ $c_{2}$ and $T$, such that for any $\left(
b_{0}\left(  \cdot\right)  ,\sigma_{0}\left(  \cdot\right)  ,f_{0}\left(
\cdot\right)  \right)  \in\mathcal{M}\left(  0,T;\mathbb{R}\times
\mathbb{R}^{1\times N}\times\mathbb{R}\right)  $ and $h_{0}\in L\left(
\mathcal{F}_{T};\mathbb{\mathbb{R}}\right)  $, FBS$\Delta$E
(\ref{nonlinear_fbsde_c_method_m1}) has a unique solution for\ $\alpha
\in\left[  0,\delta_{0}\right]  ,$ $\left[  \delta_{0},2\delta_{0}\right]
,...$ It turns out that FBS$\Delta$E (\ref{nonlinear_fbsde_c_method_m1}) has a
unique solution when $\alpha=1$. Taking $b_{0}\left(  \cdot\right)
=\sigma_{0}\left(  \cdot\right)  =f_{0}\left(  \cdot\right)  =0$, $h_{0}=0$,
we deduce that the solution of FBS$\Delta$E (\ref{nonlinear_fbsde_c_method_m0}%
) exists. This completes the proof.
\end{proof}

\end{document}